\newtheorem{thm}{Theorem}
\newtheorem{cor}[thm]{Corollary}
\newtheorem{defi}[thm]{Definition}
\newtheorem{rem}[thm]{Remark}
\newtheorem{nota}[thm]{Notation}
\newcommand\be{\begin{equation}}
\newcommand\ee{\end{equation}}
\newbox\gnBoxA
\newdimen\gnCornerHgt
\newdimen\gnArgHgt
\def\Godelnum #1{%
	\setbox\gnBoxA=\hbox{$#1$}%
	\gnArgHgt=\ht\gnBoxA%
	\ifnum \gnArgHgt<\gnCornerHgt
		\gnArgHgt=0pt%
	\else
		\advance \gnArgHgt by -\gnCornerHgt%
	\fi
	\raise\gnArgHgt\hbox{$\ulcorner$} \box\gnBoxA %
		\raise\gnArgHgt\hbox{$\urcorner$}}
\def\bdefi{\begin{defi}\rm}
\def\edefi{\end{defi}}
\def\bnota{\begin{nota}\rm}
\def\enota{\end{nota}}
\def\brem{\begin{rem}\rm}
\def\erem{\end{rem}}
\def\RCA{\textup{\textsf{RCA}}}
\def\RCAo{\textup{\textsf{RCA}}_{0}^{\omega}}
\def\RCAO{\textup{\textsf{RCA}}_{0}^{\Lambda}}
\def\WKL{\textup{\textsf{WKL}}}
\def\ORD{\textup{\textsf{ORD}}}
\def\UWKL{\textup{\textsf{UWKL}}}
\def\UORD{\textup{\textsf{UORD}}}
\def\UDRO{\textup{\textsf{UDRO}}}
\def\T{\mathcal{T}}
\def\bye{\end{document}}
\def\P{\textup{\textsf{P}}}
\def\R{{\mathbb  R}}
\def\FAN{\textup{\textsf{FAN}}}
\def\UFAN{\textup{\textsf{UFAN}}}
\def\R{{\mathbb{R}}}
\def\({\textup{(}}
\def\){\textup{)}}
\def\st{\textup{st}}
\def\asa{\leftrightarrow}
\def\di{\rightarrow}
\def\paai{\Pi_{1}^{0}\textup{-\textsf{TRANS}}}
\def\QFAC{\textup{\textsf{QF-AC}}}
\def\MU{\textup{\textsf{MU}}}
\def\HAC{\textup{\textsf{HAC}}}
\def\INT{\textup{\textsf{int}}}
\def\pw{\textup{\textsf{pw}}}
\numberwithin{equation}{section}
\numberwithin{thm}{section}
\begin{document}
\title{The effective content of Reverse Nonstandard Mathematics}
\subtitle{and the nonstandard content of effective Reverse Mathematics}
\author{Sam Sanders\thanks{This research was supported by the following funding bodies: FWO Flanders, the John Templeton Foundation, the Alexander von Humboldt Foundation, and the Japan Society for the Promotion of Science.}}
\institute{Munich Center for Mathematical Philosophy, LMU Munich, Germany \& Department of Mathematics, Ghent University
\email{sasander@me.com}}

\maketitle

\begin{abstract}
%Reverse Mathematics (RM) is a program in the foundations of mathematics founded by Friedman (\cite{fried, fried2}) and developed extensively by Simpson and others (\cite{simpson2, simpson1}).  Kohlenbach (\cite{kohlenbach2}) has introduced \emph{higher-order} RM, a framework which enables the study of \emph{uniform} versions of theorems, i.e.\ where a functional computes the object claimed to exist by the original theorem.  
The aim of this paper is to highlight a hitherto unknown \emph{computational} aspect of Nonstandard Analysis pertaining to Reverse Mathematics (RM).  
In particular, we shall establish RM-equivalences \emph{between theorems from Nonstandard Analysis} in a fragment of Nelson's \emph{internal set theory}.   
We then extract \emph{primitive recursive} terms from G\"odel's system $\textsf{T}$ (not involving Nonstandard Analysis) from the proofs of the aforementioned nonstandard equivalences.  The resulting terms turn out to be witnesses for \emph{effective}$^{\ref{flruk}}$ equivalences in Kohlenbach's \emph{higher-order} RM.  In other words, from an RM-equivalence in Nonstandard Analysis, we can extract the associated effective higher-order RM-equivalence \emph{which does not involve Nonstandard Analysis anymore}.  
Finally, we show that certain effective equivalences in turn give rise to the original nonstandard theorems from which they were derived.      
\end{abstract}

\section{Introduction}\label{intro}
\subsection{Aim of this paper}
In two words, this paper deals with a new \emph{computational} aspect of Nonstandard Analysis pertaining to {Reverse Mathematics} (RM), in line with the results in \cite{samzoo, samGH, sambon, fegas}.  
In particular, we shall prove certain RM-equivalences in (a fragment of) Nelson's \emph{internal set theory} (\cite{wownelly}), and use the framework from \cite{brie} as sketched in Section \ref{PIPI}, to obtain \emph{effective}\footnote{An implication $(\exists \Phi)A(\Phi)\di (\exists \Psi)B(\Psi)$ is \emph{effective} if there is a term $t$ in the language such that additionally $(\forall \Phi)[A(\Phi)\di B(t(\Phi))]$, i.e.\ $\Psi$ can be effectively defined in terms of $\Phi$; See also Definition \ref{effimp}.\label{flruk}} equivalences in Kohlenbach's \emph{higher-order} RM (\cite{kohlenbach2}), where the latter \emph{does not} involve Nonstandard Analysis.  Perhaps surprisingly, we also show that from certain effective equivalences, the original nonstandard equivalences can be re-obtained.  
In other words, there is `a two-way street' between higher-order RM, and the RM of Nonstandard Analysis, as suggested by the title.  
We refer to \cite{simpson1, simpson2} for an overview of RM.      

\subsection{Introducing internal set theory}\label{IST}
In Nelson's \emph{syntactic} approach to Nonstandard Analysis (\cite{wownelly}), a new predicate `st($x$)', read as `$x$ is standard' is added to the language of \textsf{ZFC}, the usual foundations of mathematics.  
The notations $(\forall^{\st}x)$ and $(\exists^{\st}y)$ are short for $(\forall x)(\st(x)\di \dots)$ and $(\exists y)(\st(y)\wedge \dots)$.  A formula is called \emph{internal} if it does not involve `st', and \emph{external} otherwise.   
The three external axioms \emph{Idealisation}, \emph{Standard Part}, and \emph{Transfer} govern the new predicate `st';  They are respectively defined\footnote{The superscript `fin' in \textsf{(I)} means that $x$ is finite, i.e.\ its number of elements are bounded by a natural number.} as:  
\begin{enumerate}
\item[\textsf{(I)}] $(\forall^{\st~\textup{fin}}x)(\exists y)(\forall z\in x)\varphi(z,y)\di (\exists y)(\forall^{\st}x)\varphi(x,y)$, for internal $\varphi$ with any (possibly nonstandard) parameters.  
\item[\textsf{(S)}] $(\forall x)(\exists^{\st}y)(\forall^{\st}z)\big((z\in x\wedge \varphi(z))\asa z\in y\big)$, for internal $\varphi$.
\item[\textsf{(T)}] $(\forall^{\st}x)\varphi(x, t)\di (\forall x)\varphi(x, t)$, where $\varphi$ is internal,  $t$ captures \emph{all} parameters of $\varphi$, and $t$ is standard.  
\end{enumerate}
Nelson's system \textsf{IST} is \textsf{ZFC} extended with the aforementioned external axioms;  
\textsf{IST} is a conservative extension of \textsf{ZFC} for the internal language, as proved in \cite{wownelly}.    % i.e.\ without `st'. 
G\"odel's system $\textsf{T}$ (\cite{avi2}) extended with fragments of the external axioms of \textsf{IST}, is studied in \cite{brie}.  
In particular, the latter studies nonstandard extensions of the internal systems \textsf{E-HA}$^{\omega}$ and $\textsf{E-PA}^{\omega}$, respectively \emph{Heyting and Peano arithmetic in all finite types and the axiom of extensionality}.       
We refer to \cite{brie}*{\S2.1} for the details of the latter (mainstream in mathematical logic) systems.  

\medskip

As to notation, in the aforementioned systems of higher-order arithmetic, each variable $x^{\rho}$ comes equipped with a superscript denoting its type, which is however often implicit.  
As to the coding of multiple variables, the type $\rho^{*}$ is the type of finite sequences of type $\rho$, a notational device used in \cite{brie} and this paper;  Underlined variables $\underline{x}$ consist of multiple variables of (possibly) different type.  
%
%\medskip
%
%In the next section, we introduce the system $\P$ assuming familiarity with the higher-type framework of G\"odel's $\textsf{T}$ (See e.g.\ \cite{brie}*{\S2.1}).    
\subsection{A fragment of internal set theory based on G\"odel's \textsf{T}}\label{PIPI}
The system $\P$ consist of the following axioms, starting with the basic ones.  
\bdefi\label{debs}[Basic axioms of $\P$]
\begin{enumerate}
\item The system \textsf{E-PA}$^{\omega*}$ be the definitional extension of \textsf{E-PA}$^{\omega}$ with types for finite sequences as in \cite{brie}*{\S2}. 
\item The set $\T^{*}$ is the collection of all the constants in the language of $\textsf{E-PA}^{\omega*}$.  
\item The external induction axiom \textsf{IA}$^{\st}$ is  % as follows.  
\be\tag{\textsf{IA}$^{\st}$}
\Phi(0)\wedge(\forall^{\st}n^{0})(\Phi(n) \di\Phi(n+1))\di(\forall^{\st}n^{0})\Phi(n).     
\ee
\item The system $ \textsf{E-PA}^{\omega*}_{\st} $ is defined as $ \textsf{E-PA}^{\omega{*}} + \T^{*}_{\st} + \textsf{IA}^{\st}$, where $\T^{*}_{\st}$
consists of the following axiom schemas.
\begin{enumerate}
\item The schema\footnote{The language of $\textsf{E-PA}_{\st}^{\omega*}$ contains a symbol $\st_{\sigma}$ for each finite type $\sigma$, but the subscript is always omitted.  Hence $\T^{*}_{\st}$ is an \emph{axiom schema} and not an axiom.\label{omit}} $\st(x)\wedge x=y\di\st(y)$,
\item The schema providing for each closed term $t\in \T^{*}$ the axiom $\st(t)$.
\item The schema $\st(f)\wedge \st(x)\di \st(f(x))$.
\end{enumerate}
\end{enumerate}
%In order to define the system $\P$, we have to discuss some of the external axioms studied in \cite{brie}.  
Secondly, Nelson's axiom \emph{Standard part} is weakened in \cite{brie} to $\HAC_{\INT}$:
\be\tag{$\HAC_{\INT}$}
(\forall^{\st}x^{\rho})(\exists^{\st}y^{\tau})\varphi(x, y)\di (\exists^{\st}F^{\rho\di \tau^{*}})(\forall^{\st}x^{\rho})(\exists y^{\tau}\in F(x))\varphi(x,y),
\ee
where $\varphi$ is any internal formula.  Note that $F$ only provides a \emph{finite sequence} of witnesses to $(\exists^{\st}y)$, explaining its name \emph{Herbrandized Axiom of Choice}.      
Thirdly,  Nelson's axiom idealisation \textsf{I} appears in \cite{brie} as follows:  
\be\tag{\textsf{I}}
(\forall^{\st} x^{\sigma^{*}})(\exists y^{\tau} )(\forall z^{\sigma}\in x)\varphi(z,y)\di (\exists y^{\tau})(\forall^{\st} x^{\sigma})\varphi(x,y), 
%?stx?? ?y? ?x??? x???x?,y????y? ?stx? ?(x,y).
\ee
where $\varphi$ is any internal formula.
\edefi
%Hence, the system $ \textsf{E-PA}^{\omega*}_{\st} $ is a trivial extension of \textsf{E-PA}$^{\omega^{*}}$.  
For the full system $\P\equiv \textsf{E-PA}^{\omega*}_{\st} +\HAC_{\INT} +\textsf{I}$, we have the following theorem.  
Here, the superscript `$S_{\st}$' is the syntactic translation defined in \cite{brie}*{Def.\ 7.1}, and also listed starting with \eqref{dombu} in the proof of Corollary \ref{consresultcor}.    
\begin{thm}\label{consresult}
Let $\Phi(\tup a)$ be a formula in the language of \textup{\textsf{E-PA}}$^{\omega*}_{\st}$ and suppose $\Phi(\tup a)^\Sh\equiv\forallst \tup x \, \existsst \tup y \, \varphi(\tup x, \tup y, \tup a)$. If $\Delta_{\intern}$ is a collection of internal formulas and
\be\label{antecedn}
\P + \Delta_{\intern} \vdash \Phi(\tup a), 
\ee
then one can extract from the proof a sequence of closed terms $t$ in $\mathcal{T}^{*}$ such that
\be\label{consequalty}
\textup{\textsf{E-PA}}^{\omega*} + \Delta_{\intern} \vdash\  \forall \tup x \, \exists \tup y\in \tup t(\tup x)\ \varphi(\tup x,\tup y, \tup a).
\ee
\end{thm}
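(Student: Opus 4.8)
\noindent\emph{Proof sketch (a proposal).}
The plan is to read the statement off the soundness theorem for the Herbrandized functional interpretation $\Sh$ of $\P$ established in \cite{brie}. Recall that $\Sh$ sends each formula $A$ in the language of $\textup{\textsf{E-PA}}^{\omega*}_{\st}$ to a formula of the shape $A^{\Sh}\equiv\forallst \tup x\,\existsst \tup y\,\chi(\tup x,\tup y)$ with $\chi$ \emph{internal}, and that on an internal formula $\theta$ the interpretation is trivial: $\theta^{\Sh}\equiv\theta$, with both tuples $\tup x,\tup y$ empty. The soundness theorem of \cite{brie} then says: if $\P\vdash A$ and $A^{\Sh}\equiv\forallst \tup x\,\existsst \tup y\,\chi(\tup x,\tup y)$, then from the proof one extracts closed terms $\tup t\in\mathcal{T}^{*}$ such that $\textup{\textsf{E-PA}}^{\omega*}\vdash\forall \tup x\,\exists \tup y\in \tup t(\tup x)\,\chi(\tup x,\tup y)$. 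This is exactly our statement in the case $\Delta_{\intern}=\emptyset$, so it remains to insert the internal side axioms and to specialise $A$.

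For the side axioms: the soundness theorem is proved by induction on derivations, and running the same induction for the proof \eqref{antecedn} in $\P+\Delta_{\intern}$ introduces only one new base case, namely an axiom $\theta\in\Delta_{\intern}$. Since $\theta$ is internal it equals its own $\Sh$-interpretation, so this case is handled by empty tuples of terms together with the formula $\theta$, which is available verbatim over $\textup{\textsf{E-PA}}^{\omega*}+\Delta_{\intern}$; in particular these base cases contribute no new terms. Hence one obtains the \emph{relativised} soundness statement: if $\P+\Delta_{\intern}\vdash A$ and $A^{\Sh}\equiv\forallst \tup x\,\existsst \tup y\,\chi(\tup x,\tup y)$, then from the proof one extracts closed terms $\tup t\in\mathcal{T}^{*}$ with $\textup{\textsf{E-PA}}^{\omega*}+\Delta_{\intern}\vdash\forall \tup x\,\exists \tup y\in \tup t(\tup x)\,\chi(\tup x,\tup y)$.

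It remains to take $A:=\Phi(\tup a)$, using the hypothesis $\Phi(\tup a)^{\Sh}\equiv\forallst \tup x\,\existsst \tup y\,\varphi(\tup x,\tup y,\tup a)$ of the theorem; thus the internal matrix $\chi$ is $\varphi(\tup x,\tup y,\tup a)$, with $\tup a$ occurring throughout merely as a free parameter. Substituting this into the relativised soundness statement of the previous paragraph yields closed terms $\tup t\in\mathcal{T}^{*}$ with $\textup{\textsf{E-PA}}^{\omega*}+\Delta_{\intern}\vdash\forall \tup x\,\exists \tup y\in \tup t(\tup x)\,\varphi(\tup x,\tup y,\tup a)$, which is precisely \eqref{consequalty}. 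Writing out the clauses of $\Sh$ explicitly, as is done starting with \eqref{dombu} in the proof of Corollary \ref{consresultcor}, makes this an explicit procedure producing $\tup t$ from the given proof.

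The only real obstacle is the soundness theorem of \cite{brie} itself: one must check that \emph{every} genuinely nonstandard axiom of $\P$---the schemas $\T^{*}_{\st}$, the external induction axiom $\textsf{IA}^{\st}$, idealisation $\textsf{I}$, and the Herbrandized choice $\HAC_{\INT}$---is $\Sh$-interpreted by appropriate closed terms of $\mathcal{T}^{*}$ in a way compatible with the logical rules and the internal axioms of $\textup{\textsf{E-PA}}^{\omega*}$. In particular, $\HAC_{\INT}$ is essentially its own $\Sh$-interpretation (which is precisely why it, rather than full Standard Part, is the choice principle present in $\P$), interpreting $\textsf{IA}^{\st}$ is where closure of $\mathcal{T}^{*}$ under the recursors is used, and interpreting $\textsf{I}$ is the most delicate case. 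Granting all of that, the only genuinely new point here is the routine bookkeeping showing that the internal side axioms $\Delta_{\intern}$ carry through the extraction, which is immediate from $\theta^{\Sh}\equiv\theta$; the single caveat is that the free parameters $\tup a$ must not become bound during the extraction, which indeed they do not, since they are already free in $\Phi(\tup a)$ and remain so in $\varphi$.
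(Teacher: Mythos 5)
Your proposal is correct and takes the same approach as the paper: the paper's entire proof is the single line ``Immediate by \cite{brie}*{Theorem~7.7},'' i.e.\ an appeal to the soundness theorem for the Herbrandized functional interpretation $S_{\st}$, which is exactly what you invoke. Your added bookkeeping about the internal side axioms $\Delta_{\intern}$ passing through the induction via $\theta^{S_{\st}}\equiv\theta$ is a correct and helpful gloss on a detail the paper leaves implicit in its citation.
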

\begin{proof}
Immediate by \cite{brie}*{Theorem 7.7}.  
\end{proof}
%The proofs of the soundness theorems in \cite{brie}*{\S5-7} provide an algorithm $\mathcal{A}$ to obtain the term $t$ from the theorem.  
The following corollary is essential to our results.  We shall refer to a formula of the form $(\forall^{\st}\underline{x})(\exists^{\st}\underline{y})\psi(\underline{x},\underline{y}, \underline{a})$, where $\varphi$ is internal, as a \emph{normal form}.   
\begin{cor}\label{consresultcor}
If for internal $\psi$ the formula $\Phi(\underline{a})\equiv(\forall^{\st}\underline{x})(\exists^{\st}\underline{y})\psi(\underline{x},\underline{y}, \underline{a})$ satisfies \eqref{antecedn}, then 
$(\forall \underline{x})(\exists \underline{y}\in t(\underline{x}))\psi(\underline{x},\underline{y},\underline{a})$ is proved in the corresponding \eqref{consequalty}.  
\end{cor}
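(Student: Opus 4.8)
The plan is to read the corollary off Theorem \ref{consresult}, once one checks that the syntactic translation $\Phi\mapsto\Phi^{S_{\st}}$ of \cite{brie}*{Def.\ 7.1} sends a \emph{normal form} to itself. So the first step is to compute $\Phi(\underline{a})^{S_{\st}}$ for $\Phi(\underline{a})\equiv(\forall^{\st}\underline{x})(\exists^{\st}\underline{y})\psi(\underline{x},\underline{y},\underline{a})$ with $\psi$ internal. Since $\psi$ contains no occurrence of $\st$, the base clause of the translation yields $\psi^{S_{\st}}\equiv\psi$, with empty tuples of auxiliary standard quantifiers. I would then apply the clause for $\exists^{\st}$ to this, and the clause for $\forall^{\st}$ to the result: as a normal form contains neither an implication nor a disjunction, no genuinely ``Herbrandising'' clause of $(\cdot)^{S_{\st}}$ is ever invoked, and both quantifier clauses merely prepend the relevant standard quantifier block to the formula they act on. Hence $\Phi(\underline{a})^{S_{\st}}\equiv(\forall^{\st}\underline{x})(\exists^{\st}\underline{y})\psi(\underline{x},\underline{y},\underline{a})\equiv\Phi(\underline{a})$, i.e.\ a normal form is its own $S_{\st}$-translation. (Should the chosen presentation of $(\cdot)^{S_{\st}}$ nevertheless Herbrandise $\exists^{\st}$ into ``$\exists^{\st}$ a finite sequence of candidate $\underline{y}$'s, one of which works'', the outcome is still a normal form, $\P$-provably equivalent to $\Phi(\underline{a})$, and I would absorb the difference by flattening the witnessing terms at the very end.)

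With the translation computed, the hypothesis \eqref{antecedn} places us exactly in the scope of Theorem \ref{consresult}, with the tuples $\underline{x},\underline{y}$ of that theorem instantiated by the present ones and with the internal $\varphi$ there taken to be $\psi$. The theorem then supplies a sequence of closed terms $t$ in $\mathcal{T}^{*}$ for which its conclusion \eqref{consequalty} reads $\textup{\textsf{E-PA}}^{\omega*}+\Delta_{\intern}\vdash(\forall\underline{x})(\exists\underline{y}\in t(\underline{x}))\,\psi(\underline{x},\underline{y},\underline{a})$, which is precisely the assertion of the corollary.

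I expect the only real work to be the translation computation of the first paragraph. It should be entirely routine once the clauses of \cite{brie}*{Def.\ 7.1} are written out, the decisive observation being that the clauses that could complicate matters --- those for $\rightarrow$, $\vee$, and (in some formulations) $\exists^{\st}$ --- either do not apply to a normal form or act only in the harmless, $\P$-provably reversible way described above; everything else is a one-line appeal to Theorem \ref{consresult}.
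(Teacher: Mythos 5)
Your overall plan---check that a normal form is invariant under the $S_{\st}$-translation and then read off the corollary from Theorem~\ref{consresult}---is the same as the paper's, and your parenthetical ``flattening'' remark in fact describes what really happens. But the central reasoning of your first paragraph is incorrect, and the parenthetical is not a side case: it \emph{is} the main event.

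The translation $(\cdot)^{S_{\st}}$ of \cite{brie}*{Def.\ 7.1} has no primitive clauses for $\forall^{\st}$ or $\exists^{\st}$. It is defined by induction only on atomic formulas, $\st(\cdot)$, $\neg$, $\vee$, and \emph{unrelativised} $\forall$. To compute $\Phi^{S_{\st}}$ one must therefore unfold $(\forall^{\st}\underline{x})(\exists^{\st}\underline{y})\psi$ as $(\forall\underline{x})\bigl[\neg\st(\underline{x})\vee\neg(\forall\underline{y})(\neg\st(\underline{y})\vee\neg\psi)\bigr]$ and push the translation through that, and in doing so the Herbrandising clauses are invoked several times: the clause for $\neg$, which introduces a standard functional dualising the existential witness tuple, and the clause for $\forall$, which bounds the inner existential block by a finite sequence. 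So the claims that ``no genuinely Herbrandising clause of $(\cdot)^{S_{\st}}$ is ever invoked'' and that ``both quantifier clauses merely prepend the relevant standard quantifier block'' are false for the definition actually in use, and the claim that a normal form ``contains neither an implication nor a disjunction'' is also false once the relativised quantifiers are unfolded. What saves the computation is that each of these Herbrandisations \emph{collapses}: the functional variables degenerate when their argument tuples are empty, the finite-sequence bounds simplify by extracting a single standard element, and the net output is the normal form $(\forall^{\st}\underline{w}')(\exists^{\st}\underline{w})(\exists\underline{w}''\in\underline{w})\psi(\underline{w}',\underline{w}'',\underline{a})$, carrying exactly one residual Herbrandisation of the innermost existential---absorbed by the flattening you mention. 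This collapse is not automatic from the shape of $\Phi$; it has to be tracked clause by clause, which is precisely what the paper's Section~\ref{apie} does. In short: your conclusion and recovery plan are right, but the step you label ``entirely routine'' and dismiss in one line is the actual content of the proof, and the justification you offer for it would not survive an unfolding of Def.\ 7.1.
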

\begin{proof}
Clearly, if for $\psi$ and $\Phi$ as given we have $\Phi(\underline{a})^{S_{\st}}\equiv \Phi(\underline{a})$, then the corollary follows immediately from the theorem.  
A tedious but straightforward verification using the clauses (i)-(v) in \cite{brie}*{Def.\ 7.1} establishes that indeed $\Phi(\underline{a})^{S_{\st}}\equiv \Phi(\underline{a})$.  For completeness, this verification is performed in Section \ref{apie}.    \qed
\end{proof}
Finally, the previous theorems do not really depend on the presence of full Peano arithmetic.  
Indeed, let \textsf{E-PRA}$^{\omega}$ be the system defined in \cite{kohlenbach2}*{\S2} and let \textsf{E-PRA}$^{\omega*}$ 
be its definitional extension with types for finite sequences as in \cite{brie}*{\S2}.  We permit ourselves a slight abuse of notation by not distinguishing between Kohlenbach's $\RCAo\equiv \textup{\textsf{E-PRA}}^{\omega}+\QFAC^{1,0}$ (See \cite{kohlenbach2}*{\S2}) and $\textup{\textsf{E-PRA}}^{\omega*}+\QFAC^{1,0}$.  
\begin{cor}\label{consresultcor2}
The previous theorem and corollary go through for $\P$ and $\textup{\textsf{E-PA}}^{\omega*}$ replaced by $\RCAO\equiv \textsf{\textup{E-PRA}}^{\omega*}+\T_{\st}^{*} +\HAC_{\INT} +\textsf{\textup{I}}+\QFAC^{1,0}$ and $\RCAo$.  
\end{cor}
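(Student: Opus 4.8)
The plan is to observe that the proof of \cite{brie}*{Theorem 7.7} underlying Theorem \ref{consresult}, together with the verification behind Corollary \ref{consresultcor}, is \emph{uniform} in the choice of internal base theory. That proof proceeds by induction on the length of the derivation \eqref{antecedn}, handling each rule of the predicate calculus and each internal or external axiom in turn, and for the interpretation of these it uses only recursors and induction already present in the base system itself; for the external axioms, in fact, only projection- and identity-like terms together with primitive recursion at type $0$ are ever needed. I would therefore re-run that proof with $\textup{\textsf{E-PA}}^{\omega*}$ replaced throughout by $\textup{\textsf{E-PRA}}^{\omega*}$ and $\P$ by $\RCAO$, and check that every step still closes and that the extracted witnessing terms remain in the corresponding (primitive recursive) term calculus $\mathcal{T}^{*}$, now formed over $\textup{\textsf{E-PRA}}^{\omega*}$.

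In detail, $\RCAO$ differs from $\P$ only by weakening the internal base $\textup{\textsf{E-PA}}^{\omega*}$ to $\textup{\textsf{E-PRA}}^{\omega*}$ and adjoining the internal axiom $\QFAC^{1,0}$. The internal axioms of $\textup{\textsf{E-PRA}}^{\omega*}$ (the defining equations of the combinators and of the recursor $R_{0}$, extensionality, and quantifier-free induction), as well as $\QFAC^{1,0}$, are handled by the $S_{\st}$-translation exactly as internal axioms are in \cite{brie}: each is interpreted trivially or by an instance of itself, so all of them remain available on the right-hand side, where indeed $\RCAo$ contains $\textup{\textsf{E-PRA}}^{\omega*}+\QFAC^{1,0}$. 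The external axioms $\T^{*}_{\st}$, $\HAC_{\INT}$ and \textsf{I} are handled word for word as in \cite{brie}*{\S7}, their interpretations being witnessed by projection- and identity-like terms. The one axiom carrying genuine iteration is the external induction axiom $\textsf{IA}^{\st}$, whose interpretation iterates, along a type-$0$ index, the `step' term extracted from the induction hypothesis; this is an instance of primitive recursion at type $0$, i.e.\ of the recursor $R_{0}$, which is present in $\textup{\textsf{E-PRA}}^{\omega*}$. Since every term occurring in a derivation over $\RCAO$ is already built from the constants of $\textup{\textsf{E-PRA}}^{\omega*}$, and the finitely many axiom-witnessing terms adjoined by the interpretation are too, the output terms $t$ of \eqref{consequalty} lie in $\mathcal{T}^{*}$ and $\RCAo+\Delta_{\intern}$ proves the displayed conclusion. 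Corollary \ref{consresultcor} then carries over unchanged, since its proof used only the purely syntactic clauses (i)-(v) of \cite{brie}*{Def.\ 7.1}, which do not mention the base theory; hence a normal form $\Phi(\underline{a})$ still satisfies $\Phi(\underline{a})^{S_{\st}}\equiv\Phi(\underline{a})$, and the stated conclusion follows from the analogue of Theorem \ref{consresult} just established.

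The main obstacle I anticipate is precisely this bookkeeping: one must go through \cite{brie}*{\S7} line by line and confirm that the term-extraction argument nowhere silently appeals to a recursor $R_{\rho}$ with $\rho$ of positive type, nor to an instance of induction on a formula of higher logical complexity than those available in $\textup{\textsf{E-PRA}}^{\omega*}$. The genuinely delicate point is the interplay between the interpretation of $\textsf{IA}^{\st}$ and the clauses handling the induction rule of the underlying calculus, since this is where iteration enters; one should check there that the iterations are always along a natural-number index and that forming the iterated step terms does not itself require a recursor $R_{\rho}$ with $\rho$ of positive type. Once this is confirmed, no new ideas are needed: the underlying method is insensitive to the precise amount of arithmetic in the base, and the net effect is merely that the extracted witnesses now lie in the \emph{primitive recursive} fragment of G\"odel's $\textsf{T}$, matching the standing of $\RCAo$ in Kohlenbach's higher-order RM.
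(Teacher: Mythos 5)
Your proposal is correct and follows essentially the same strategy as the paper: observe that the soundness proof of \cite{brie}*{Theorem 7.7} is modular in the internal base theory and then re-run it over $\textup{\textsf{E-PRA}}^{\omega*}$ in place of $\textup{\textsf{E-PA}}^{\omega*}$, checking that no step requires more arithmetic than the weaker base provides. The only difference is one of emphasis: you flag the interpretation of $\textsf{IA}^{\st}$ and the availability of the recursor $R_{0}$ as the delicate point, whereas the paper's (terser) proof pinpoints a different and arguably more basic requirement, namely that the base theory include \textsf{EFA} (i.e.\ $\textsf{I}\Delta_{0}+\textsf{EXP}$), since exponentiation is what is needed to manipulate the \emph{finite sequences} appearing everywhere in the $S_{\st}$-translation (the bounded witnesses $\exists \underline{y}\in \underline{t}(\underline{x})$ and the finite-sequence types $\rho^{*}$). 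Both observations are part of the same verification, and both are satisfied by $\textup{\textsf{E-PRA}}^{\omega*}$; your proposal would benefit from also making the finite-sequence/$\textsf{EXP}$ point explicit, since it is the genuine lower bound the paper identifies.
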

\begin{proof}
The proof of \cite{brie}*{Theorem 7.7} goes through for any fragment of \textsf{E-PA}$^{\omega{*}}$ which includes \textsf{EFA}, sometimes also called $\textsf{I}\Delta_{0}+\textsf{EXP}$.  
In particular, the exponential function is (all what is) required to `easily' manipulate finite sequences.    
\end{proof}

\subsection{Notations}
We mostly use the same notations as in \cite{brie}, some of which we repeat here.  
\begin{rem}[Notations]\label{notawin}\rm
We write $(\forall^{\st}x^{\tau})\Phi(x^{\tau})$ and $(\exists^{\st}x^{\sigma})\Psi(x^{\sigma})$ as short for 
$(\forall x^{\tau})\big[\st(x^{\tau})\di \Phi(x^{\tau})\big]$ and $(\exists x^{\sigma})\big[\st(x^{\sigma})\wedge \Psi(x^{\sigma})\big]$.     
We also write $(\forall x^{0}\in \Omega)\Phi(x^{0})$ and $(\exists x^{0}\in \Omega)\Psi(x^{0})$ as short for 
$(\forall x^{0})\big[\neg\st(x^{0})\di \Phi(x^{0})\big]$ and $(\exists x^{0})\big[\neg\st(x^{0})\wedge \Psi(x^{0})\big]$.  Furthermore, if $\neg\st(x^{0})$ (resp.\ $\st(x^{0})$), we also say that $x^{0}$ is `infinite' (resp.\ finite) and write `$x^{0}\in \Omega$'.  %In keeping with the internal 
Finally, a formula $A$ is `internal' if it does not involve $\st$, and $A^{\st}$ is defined from $A$ by appending `st' to all quantifiers (except bounded number quantifiers).    
\end{rem}
Secondly, we will use the usual notations for rational and real numbers and functions as introduced in \cite{kohlenbach2}*{p.\ 288-289} (and \cite{simpson2}*{I.8.1} for the former).  
\begin{defi}[Real number]\label{keepintireal}\rm
A (standard) real number $x$ is a (standard) fast-converging Cauchy sequence $q_{(\cdot)}^{1}$, i.e.\ $(\forall n^{0}, i^{0})(|q_{n}-q_{n+i})|<_{0} \frac{1}{2^{n}})$.  
We freely make use of Kohlenbach's `hat function' from \cite{kohlenbach2}*{p.\ 289} to guarantee that every sequence $f^{1}$ can be viewed as a real.  We also use the notation $[x](k):=q_{k}$ for the $k$-th approximation of real numbers.    
Two reals $x, y$ represented by $q_{(\cdot)}$ and $r_{(\cdot)}$ are \emph{equal}, denoted $x=_{\R}y$, if $(\forall n)(|q_{n}-r_{n}|\leq \frac{1}{2^{n}})$. Inequality $<_{\R}$ is defined similarly.         
We also write $x\approx y$ if $(\forall^{\st} n)(|q_{n}-r_{n}|\leq \frac{1}{2^{n}})$ and $x\gg y$ if $x>_{\R}y\wedge x\not\approx y$.  Functions $F:\R\di \R$ are represented by $\Phi^{1\di 1}$ such that 
\be\tag{\textsf{RE}}
(\forall x, y)(x=_{\R}y\di \Phi(x)=_{\R}\Phi(y)),
\ee
 i.e.\ equal reals are mapped to equal reals.  Finally, sets are denoted $X^{1}, Y^{1}, Z^{1}, \dots$ and are given by their characteristic functions $f^{1}_{X}$, i.e.\ $(\forall x^{0})[x\in X\asa f_{X}(x)=1]$, where $f_{X}^{1}$ is assumed to be binary.      
\end{defi}
Thirdly, we use the usual extensional notion of equality.  
%None of our results depend crucially on this notion, i.e.\ we would obtain the same results for non-extensional notions of equality by dropping the axiom \eqref{EXT} introduced now.    
\begin{rem}[Equality]\label{equ}\rm
Equality between natural numbers `$=_{0}$' is a primitive.  Equality `$=_{\tau}$' for type $\tau$-objects $x,y$ is then defined as follows:
\be\label{aparth}
[x=_{\tau}y] \equiv (\forall z_{1}^{\tau_{1}}\dots z_{k}^{\tau_{k}})[xz_{1}\dots z_{k}=_{0}yz_{1}\dots z_{k}]
\ee
if the type $\tau$ is composed as $\tau\equiv(\tau_{1}\di \dots\di \tau_{k}\di 0)$.
In the spirit of Nonstandard Analysis, we define `approximate equality $\approx_{\tau}$' as follows:
\be\label{aparth2}
[x\approx_{\tau}y] \equiv (\forall^{\st} z_{1}^{\tau_{1}}\dots z_{k}^{\tau_{k}})[xz_{1}\dots z_{k}=_{0}yz_{1}\dots z_{k}]
\ee
with the type $\tau$ as above.  
%As noted in \cite{brie}*{p.\ 1973}, weak nonstandard systems can include the axiom of extensionality for $=_{\tau}$, but not for $\approx_{\tau}$.
The system $\P$ includes the \emph{axiom of extensionality}: %for all $\varphi^{\rho\di \tau}$ as follows:
\be\label{EXT}\tag{\textsf{E}}  
(\forall  x^{\rho},y^{\rho}, \varphi^{\rho\di \tau}) \big[x=_{\rho} y \di \varphi(x)=_{\tau}\varphi(y)   \big].
\ee
However, as noted in \cite{brie}*{p.\ 1973}, the so-called axiom of \emph{standard} extensionality \eqref{EXT}$^{\st}$ is problematic and cannot be included in $\P$.  
Finally, a functional $\Xi^{ 1\di 0}$ is called an \emph{extensionality functional} for $\varphi^{1\di 1}$ if 
\be\label{turki}
(\forall k^{0}, f^{1}, g^{1})\big[ \overline{f}\Xi(f,g, k)=_{0}\overline{g}\Xi(f,g,k) \di \overline{\varphi(f)}k=_{0}\overline{\varphi(g)}k \big],  
\ee
i.e.\ $\Xi$ witnesses \eqref{EXT} for $\varphi$.  
As will become clear in Section \ref{main}, $\eqref{EXT}^{\st}$ is translated to the existence of an extensionality functional when applying Corollary \ref{consresultcor}.  
\end{rem} 
\section{Main results}\label{main}
In this section, we prove that from certain RM-equivalences in Nonstandard Analysis, one can extract \emph{effective} RM-equivalences in Kohlenbach's higher-order RM.   %thanks to the framework \cite{brie} sketched in Section \ref{PIPI}.  
The notion of `effective' implication is defined as expected.  
\bdefi[Effective implication]\label{effimp}
An implication $(\exists \Phi)A(\Phi)\di (\exists \Psi)B(\Psi)$ is \emph{effective} if there is a term $t$ in the language such that additionally $(\forall \Phi)[A(\Phi)\di B(t(\Phi))]$, i.e.\ $\Psi$ can be effectively defined in terms of $\Phi$.    
\edefi
The terms involved in our effective implications are \emph{primitive recursive} in the sense of G\"odel's system \textsf{T}, as discussed in Section \ref{PIPI}.  
In light of the elementary nature of an extensionality functional (See Remark~\ref{equ}), we still refer to an implication as `effective', if the term $t$ as in Definition \ref{effimp} involves an extensionality functional.  This assumption is not entirely innocent by \cite{kooltje}*{Remark 3.6}.    

\medskip

We shall treat the uniform version of weak K\"onig's lemma in Section \ref{FUWKL} in some detail, by way of illustration.  
In Section \ref{group1}, we treat the main theorem from \cite{simha} pertaining to group theory, using the proofs from Section \ref{FUWKL} as a template.  
Similar results for the RM of weak K\"onig's lemma may be proved similarly.    
\subsection{Uniform weak K\"onig's lemma}\label{FUWKL}
In this section, we first establish a particular nonstandard equivalence involving a fragment of Nelson's axiom \emph{Transfer}, and a (nonstandard and uniform) version of weak K\"onig's lemma.  As a result of applying Corollary~\ref{consresultcor2} to this nonstandard equivalence, we obtain an \emph{effective} equivalence between the uniform version of weak K\"onig's lemma $\UWKL$ and a version of arithmetical comprehension $(\mu^{2})$.  % which are defined as follows: 
\be\tag{$\UWKL$}
(\exists \Phi^{1\di 1})(\forall T \leq_{1}1)\big[ (\forall n)(\exists \alpha)(|\alpha|=n\wedge \alpha\in T)\di (\forall m)(\overline{\Phi(T)}m\in T) \big]
\ee
\be\tag{$\mu^{2}$}\label{Frak}
(\exists \mu^{2})(\forall f^{1})\big[(\exists x^{0})f(x)=0 \di  f(\mu(f)) \big]
\ee  
The functional $(\mu^{2})$ is also known as \emph{Feferman's non-constructive mu-operator} (See \cite{avi2}).  
We also need the following restriction of Nelson's axiom \emph{Transfer}:
 \be\tag{$\paai$}
(\forall^{\st}f^{1})\big[(\forall^{\st}n^{0})f(n)=0\di (\forall m)f(m)=0\big].
\ee  
Define $\UWKL(\Phi, T)$ as $\UWKL$ without the leading quantifiers, and $\UWKL^{+}$ as:
\[
(\exists^{\st}\Phi^{1\di 1})\big[(\forall^{\st}T^{1})\UWKL(\Phi, T)\wedge (\forall^{\st} T^{1}, S^{1})\big(T\approx_{1} S \di \Phi(T)\approx_{1}\Phi(S) \big)\big],  
\]
Note that the second conjunct expresses that $\Phi$ is \emph{standard extensional} (See Remark~\ref{equ});  
Finally, let $\MU(\mu)$ be $(\mu^{2})$ without the leading existential quantifier.  
\begin{thm}\label{proto}
From a proof in $\RCAO$ that $\UWKL^{+}\asa \paai$, terms $s,t$ can be extracted such that $\RCAo$ proves:
\be\label{frood8}
(\forall \mu^{2})\big[\textsf{\MU}(\mu)\di \UWKL(s(\mu)) \big] \wedge (\forall \Phi^{1\di 1})\big[ \UWKL(\Phi)\di  \MU(t(\Phi, \Xi))  \big].
\ee
where $\Xi$ is an extensionality functional for $\Phi$.  
\end{thm}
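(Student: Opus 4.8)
The plan is to read Theorem~\ref{proto} as a term‑extraction statement: one first establishes the nonstandard equivalence $\RCAO\vdash\UWKL^{+}\asa\paai$ by a direct external argument, and then applies the $\RCAO$‑version of Corollary~\ref{consresultcor2} (respectively of Theorem~\ref{consresult}) to each of the two implications, reading $s$ and $t$ off the realisers the interpretation produces.

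\emph{The nonstandard equivalence.} For $\paai\di\UWKL^{+}$: rewrite $\paai$ as the normal form $(\forall^{\st}f^{1})(\exists^{\st}n^{0})[(\exists m)f(m)\neq 0\di f(n)\neq 0]$ and apply $\HAC_{\INT}$ to obtain a standard functional $F$ which, on standard inputs, exhibits a nonzero value of its argument whenever one exists; from $F$ one defines by a term a standard ``proto‑$\mu$'' that is correct on standard inputs. Running the usual leftmost‑path recursion with this proto‑$\mu$ in place of $(\mu^{2})$ produces a standard $\Phi^{1\di 1}$; since every auxiliary sequence that arises along the recursion on a \emph{standard} tree $T\leq_{1}1$ is itself standard, the proto‑$\mu$ is correct there, so $\Phi(T)$ is an infinite path of every standard $T$ with arbitrarily long branches, i.e.\ $(\forall^{\st}T)\UWKL(\Phi,T)$, and $\Phi$, being a term of $\T$ in $F$, is standard‑extensional by \eqref{EXT}. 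Conversely, given the standard, standard‑extensional $\Phi$ from $\UWKL^{+}$ and a standard $f$ with $(\forall^{\st}n)f(n)=0$, one runs the higher‑order RM argument for ``$\UWKL\di(\mu^{2})$'' on the trees $T_{g}$ coding least witnesses, relativised to standard objects and using the extensionality functional of $\Phi$ to bound the searches (the nonstandard counterpart of $\UWKL\equiv(\mu^{2})$); this yields a standard functional searching correctly on standard inputs, and then $(\forall^{\st}n)f(n)=0$ together with $\st(f)$, $\st(\Phi)$ and the $\T^{*}_{\st}$‑axioms forces $(\forall m)f(m)=0$, i.e.\ $\paai$.

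\emph{The extraction.} Unfolding the $\approx_{1}$'s via Remark~\ref{equ} and applying $\HAC_{\INT}$ turns $\UWKL^{+}$ into $(\exists^{\st}\Phi,G)(\forall^{\st}T,S,m)\,\Psi'$ with $\Psi'$ internal, where, over $\RCAo$, the functional $G$ and an extensionality functional $\Xi$ for $\Phi$ are interdefinable by fixed closed terms of $\T$ (Remark~\ref{equ}); likewise the Herbrand functional $F$ of $\paai$ and a genuine $\mu$ with $\MU(\mu)$ are interdefinable over $\RCAo$. For $\UWKL^{+}\di\paai$ the standard prenex laws — valid here since every type has a standard element $0^{\rho}$ — already bring the implication to a normal form $(\forall^{\st}\Phi,G,f)(\exists^{\st}T,S,m,n)\,\xi$ with $\xi$ internal, so Corollary~\ref{consresultcor2} delivers a closed term $u$ of $\T$ with $\RCAo\vdash(\forall\Phi,G,f)(\exists(T,S,m,n)\in u(\Phi,G,f))\,\xi$; for $\paai\di\UWKL^{+}$, which sits higher in the prenex hierarchy, one invokes Theorem~\ref{consresult} proper, whose functional interpretation collapses the formula to a normal form regardless, obtaining a closed term $v$ producing, out of a Herbrand‑$\mu$, a pair $(\Phi,G)$. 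Precomposing $v$ with the term $\mu\mapsto F$ and projecting onto the $\Phi$‑component gives $s$, so that $\MU(\mu)\di\UWKL(s(\mu))$ — with no extensionality functional appearing, as $s(\mu)$ is automatically extensional by \eqref{EXT}. Precomposing $u$ with the term $\Xi\mapsto G$, then performing inside the extracted finite sequence the bounded search for the coordinate $n$ at which $f$ is genuinely nonzero (and returning $0$ otherwise), gives $t(\Phi,\Xi)$ with $\UWKL(\Phi)\di\MU(t(\Phi,\Xi))$; a routine check of the matrices and of \cite{brie}*{Def.\ 7.1} confirms that these two internal statements are exactly the conjuncts of \eqref{frood8}.

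\emph{Main obstacle.} The mathematics is light; the work — and the only delicate point — lies in the normal‑form bookkeeping and, above all, in verifying that the finite‑sequence‑valued terms delivered by the interpretation collapse to \emph{genuine} functionals satisfying $\UWKL$ and $\MU$ \emph{on the nose}. This is exactly what the design of $\UWKL^{+}$ (in particular its standard‑extensionality conjunct) buys: the extracted internal matrices have the shape ``(the internal side‑conditions that the realising Herbrand data of the antecedent must meet) $\di$ (the property wanted)'', and when that data is a genuine $\mu$ — respectively a genuine $\Phi$ together with its extensionality functional $\Xi$ — the side‑conditions hold outright, so the conclusion is recovered by a bounded search inside the returned finite sequence. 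It is precisely this mechanism that upgrades the ``standard‑only'' correctness used in the nonstandard proof to the full, internal correctness asserted by \eqref{frood8}; one must check this passes through for each of the Herbrandised blocks — those from the $\approx_{1}$'s, from $\HAC_{\INT}$ applied to $\paai$, and from standard‑extensionality — and that at no point is a $\mu$‑operator beyond $\T$ silently required.
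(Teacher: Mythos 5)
Your overall decomposition — first prove $\UWKL^{+}\asa\paai$ externally in $\RCAO$, then feed the normal forms of each implication through Theorem~\ref{consresult}/Corollary~\ref{consresultcor2} and read off $s$ and $t$ — is exactly the paper's structure, and the term-extraction bookkeeping you describe for $\UWKL^{+}\di\paai$ (prenexing, dropping `st' from the antecedent, invoking Corollary~\ref{consresultcor2}, then taking the maximum of the returned finite sequence) matches what is carried out in Section~\ref{FUWKL}. However, your treatment of the nonstandard equivalence itself has two genuine gaps.

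For $\UWKL^{+}\di\paai$: you claim that $(\forall^{\st}n)f(n)=0$, together with $\st(f)$, $\st(\Phi)$ and the $\T^{*}_{\st}$-axioms, ``forces $(\forall m)f(m)=0$''. It does not; the $\T^{*}_{\st}$-axioms give closure of standardness under application, not transfer, and nothing in the sketch turns $\UWKL^{+}$ into a reason for $f$ to vanish everywhere. The mechanism that is actually needed is a contradiction via a discontinuity witness: assuming $(\forall^{\st}n)f(n)=0\wedge(\exists m)f(m)\neq0$, one builds two trees $T_{0}\approx_{1}T_{1}$ (agreeing at all standard levels because $f$ vanishes there) whose unique paths are $00\dots$ and $11\dots$ respectively, so that $\Phi(T_{0})\not\approx_{1}\Phi(T_{1})$, contradicting the standard-extensionality conjunct of $\UWKL^{+}$. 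Without some such construction this direction is not established.

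For $\paai\di\UWKL^{+}$: you assert that $\Phi$, being a term of $\T$ in the standard Herbrand functional $F$, ``is standard-extensional by \eqref{EXT}''. This is false as stated: \eqref{EXT} gives $T=_{1}S\di\Phi(T)=_{1}\Phi(S)$, which does not yield $T\approx_{1}S\di\Phi(T)\approx_{1}\Phi(S)$ — indeed Remark~\ref{equ} notes explicitly that $\eqref{EXT}^{\st}$ is unavailable in $\P$. One must use $\paai$ itself to upgrade $T\approx_{1}S$ (for standard $T,S$) to $T=_{1}S$ before invoking \eqref{EXT}; this is what the paper does. The oversight is fixable since $\paai$ is the hypothesis, but it must be said. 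Relatedly, you do not justify that the leftmost-path recursion driven by a ``proto-$\mu$'' correct only on \emph{standard} inputs yields a genuine path, i.e.\ $(\forall m)(\overline{\Phi(T)}m\in T)$ with no `st' on $m$: the auxiliary queries arising at nonstandard stages of the recursion need not be standard, so the proto-$\mu$ may lie there and the path may exit $T$. The paper's route — derive $\WKL^{\st}$, apply $\paai$ to its conclusion so that the path's membership in $T$ is a genuine internal $\Pi^{0}_{1}$ fact, then use $\HAC_{\INT}$ together with $(\mu^{2})^{\st}$ to select the correct entry — is designed precisely to sidestep this problem, and you should adopt something equivalent.
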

\begin{proof}
Due to space constraints, we shall only prove that $\UWKL^{+}\di \paai$ in $\RCAO$, and obtain the associated second conjunct of \eqref{frood8}, which is the most interesting result anyway.  The remaining results are proved in Section~\ref{kulll}.  

\medskip

To prove $\UWKL^{+}\di \paai$ in $\RCAO$, assume $\UWKL^{+}$ and suppose $\paai$ is false, i.e.\ there is $f$ such that $(\forall^{\st}n)f(n)=0 \wedge (\exists m^{0})f(m)\ne 0$.
Now define the tree $T_{i}$ for $i=0,1$ as follows
\[
\sigma\in T_{i}\asa \big[(\forall n^{0}<|\sigma|)(\sigma(n)=i) \vee \big[(\forall n^{0}<|\sigma|)(\sigma(n)=1-i)\wedge (\forall m\leq |\sigma|)f(n)=0\big] \big].
\]
Note that $T_{0}\approx_{1} T_{1}$ but that the former (resp.\ the latter) only has one path, namely $00\dots$ (resp.\ $11\dots$).  Hence, we must have 
$\Phi(T_{0})\not\approx_{1} \Phi(T_{1})$ for $\Phi$ as in $\UWKL^{+}$, which however contradicts the standard extensionality of $\Phi$.  
In light of this contradiction, we have $\UWKL^{+}\di \paai$.  

\medskip

Finally, we shall prove the second conjunct in \eqref{frood8}.  %The first conjunct is proved in exactly the same way.  
Note that $\paai$ can be brought into
the following normal form: $(\forall^{\st}f^{1})(\exists^{\st}i^{0})\big[(\exists n^{0})f(n)=0\di (\exists m\leq i)f(m)=0\big]$, where the formula in square brackets is abbreviated by $B(f, i)$.  Similarly, the second conjunct of $\UWKL^{+}$ has the following normal form:  
\be\label{kurve}
(\forall^{\st} T^{1}, S^{1}, k^{0})(\exists^{\st} N)\big(\overline{T}N=_{0}\overline{S}N \di \overline{\Phi(T)}k=_{0}\overline{\Phi(S)}k\big),
\ee
which is immediate by resolving `$\approx_{1}$'; We denote the formula in square brackets by $A(T, S, N, k, \Phi)$.  
Hence, $\UWKL^+\di \paai$ is easily seen to imply:
\[
(\forall^{\st}\Phi, \Xi)\big[ [(\forall^{\st}T^{1})\UWKL(\Phi, T)\wedge (\forall^{\st} U^{1}, S^{1}, k^{0})A(U, S, \Xi(U, S, k), k, \Phi)]  \di     (\forall^{\st}f^{1})(\exists^{\st}n)B(f,n)\big] 
\]
Dropping the `st' in the antecedent of the implication and brining out the remaining standard quantifiers, we obtain
\[
(\forall^{\st}\Phi, \Xi ,f^{1})(\exists^{\st}n)\big[ [(\forall T^{1})\UWKL(\Phi, T)\wedge (\forall  U^{1}, S^{1}, k^{0})A(U, S, \Xi(U, S, k), k, \Phi)]  \di    B(f,n)\big] 
\]
Let $C(\Phi, \Xi, f, n)$ be the formula in big square brackets and apply Corollary~\ref{consresultcor2} to `$\RCAO\vdash (\forall^{\st}\Phi, \Xi, f^{1})(\exists^{\st}n)C(\Phi, \Xi, f, n)$' to obtain a term $t$ such that $\RCAo$ proves $ (\forall \Phi, \Xi, f^{1})(\exists n\in t(\Phi, \Xi, f))C(\Phi, \Xi, f, n)$.  Now define the term $s(\Phi, \Xi, f)$ as $\max_{i<|t(\Phi, \Xi, f)|}t(\Phi, \Xi, f)(i)$ and note that $s$ provides the functional $(\mu^{2})$ if $\Phi$ satisfies $(\forall T^{1})\UWKL(\Phi,T )$ and $\Xi$ is the associated extensionality functional. 
\qed
\end{proof}
Thus, we have obtained the effective equivalence $\UWKL\asa (\mu^{2})$ from the associated nonstandard equivalence $\paai\asa \UWKL^{+}$.
Now, Kohlenbach proves the equivalence $(\mu^{2})\asa \UWKL$ in \cite{kooltje}, and also established a related \emph{effective} equivalence.  
Hence, the final equivalence in Theorem~\ref{proto} is not that surprising, but our methodology arguably is:  The \emph{effective} equivalence in the latter theorem is obtained \emph{automatically} (in the sense that the terms $s, t$ can be `read off' from the nonstandard proof in $\RCAO$) from a proof in which no attention to computational content is given, and Nonstandard Analysis is even used.  

\medskip

In the aforementioned proof from \cite{kooltje}, $\Phi$ from $\UWKL$ is shown to be discontinuous, and \emph{Grilliot's trick} is then applied to obtain the Halting problem.  Intuitively speaking, the proof of Theorem \ref{proto} is similar: $\Phi$ from $\UWKL^{+}$ is 
\emph{nonstandard {dis}continuous} in light of $T_{0}\approx_{1}T_{1}\wedge \Phi(T_{0})\not\approx_{1}\Phi(T_{1})$, and the latter property allows us to derive a contradiction from the combination of standard extensionality and the negation of $\paai$.  The Transfer principle $\paai$ becomes $(\mu^{2})$ (which also solves the Halting problem) after applying Corollary~\ref{consresultcor2}.

\subsection{Group theory and order}\label{group1}
In this section, we prove a nonstandard equivalence for Levi's theorem for countable abelian groups from \cite{simha} and, as in the previous section, extract an effective equivalence.  We also study the \emph{contraposition} of Levi's theorem in the same way, yielding rather different results.  Group theory is introduced in RM in \cite{simpson2}*{III.6}.   
\begin{defi}[\cite{simha}*{\S2}]\label{shiova}
Let $A$ be a countable abelian group. $A$ is torsion-free if $(\forall n^{0})(\forall a\in A\setminus\{0_{A}\})(n\times a\ne 0)$. $A$ is orderable if there exists a linear
ordering `$<$' on $A$ such that $(\forall a, b, c\in A)(a\leq b \di a+c \leq b+c)$. $P$ is the positive cone of $A$ if $P=\{ a\in A: a\geq 0\}$.  
\edefi
Let $\textsf{ORD}$ be Levi's theorem that every torsion-free countable abelian group is orderable.  Define $\UORD(\Phi, A)$ as the statement that $\Phi(A)$ is the order for $A$ as in $\textsf{ORD}$.  Finally, define $\UORD^{+}$ as follows:
\[
(\exists^{\st}\Phi^{1\di 1})\big[(\forall^{\st}A^{1})\UORD(\Phi, A)\wedge (\forall^{\st} A^{1}, B^{1})\big(A\approx_{1} B \di \Phi(A)\approx_{1}\Phi(B) \big)\big].  
\]
We have the following theorem.
\begin{thm}\label{proto2}
From a proof in $\RCAO$ that $\UORD^{+}\asa \paai$, terms $s,t$ can be extracted such that $\RCAo$ proves:
\be\label{frood9}
(\forall \mu^{2})\big[\textsf{\MU}(\mu)\di \UORD(s(\mu)) \big] \wedge (\forall \Phi^{1\di 1})\big[ \UORD(\Phi)\di  \MU(t(\Phi, \Xi))  \big].
\ee
where $\Xi$ is an extensionality functional for $\Phi$.  
\end{thm}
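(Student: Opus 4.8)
The plan is to follow the template established by the proof of Theorem~\ref{proto}, replacing the role of $\UWKL$ and its two trees $T_0,T_1$ with $\ORD$ and a suitable pair of countable torsion-free abelian groups that are ``$\approx_1$-equal'' but carry genuinely different orderings. First I would prove $\UORD^{+}\di\paai$ in $\RCAO$: assuming $\UORD^{+}$ fails of $\paai$ via some $f$ with $(\forall^{\st}n)f(n)=0$ yet $(\exists m)f(m)\neq 0$, I would build two copies $A_0,A_1$ of the same underlying group $\mathbb{Z}$ (or $\mathbb{Z}^{(\omega)}$), coded so that the group operations agree on all \emph{standard} inputs --- hence $A_0\approx_1 A_1$ --- but so that the internal data differs at the infinite index $m$ in a way that forces the positive cones (equivalently the orderings $\Phi(A_0)$ and $\Phi(A_1)$ produced by $\UORD^{+}$) to disagree on some standard element. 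The cleanest realization is to make $A_1$ be $A_0$ with the ``sign'' of a generator flipped past level $m$, so that on $\mathbb{Z}$ the only orderings are the standard one and its reverse, and the $\approx_1$-indistinguishable presentations are pushed onto opposite orderings; then $A_0\approx_1 A_1 \wedge \Phi(A_0)\not\approx_1\Phi(A_1)$ contradicts standard extensionality of $\Phi$, yielding $\UORD^{+}\di\paai$.

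Next I would extract the second conjunct of \eqref{frood9} exactly as in Theorem~\ref{proto}. The principle $\paai$ already has the normal form $(\forall^{\st}f^1)(\exists^{\st}i^0)B(f,i)$ with $B(f,i)\equiv[(\exists n)f(n)=0\di(\exists m\leq i)f(m)=0]$, and the standard-extensionality conjunct of $\UORD^{+}$ has, upon resolving $\approx_1$, the normal form $(\forall^{\st}A^1,B^1,k^0)(\exists^{\st}N)A(A,B,N,k,\Phi)$ with $A(A,B,N,k,\Phi)\equiv(\overline{A}N=_0\overline{B}N\di\overline{\Phi(A)}k=_0\overline{\Phi(B)}k)$. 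Feeding the extensionality functional $\Xi$ for $\Phi$ into the first normal form turns $\UORD^{+}\di\paai$ into an implication whose antecedent is purely internal, so dropping the inner \textup{st}'s and pulling out the standard quantifiers gives $(\forall^{\st}\Phi,\Xi,f^1)(\exists^{\st}n)C(\Phi,\Xi,f,n)$ for an internal $C$. Corollary~\ref{consresultcor2} applied in $\RCAO$ yields a term $t$ with $\RCAo\vdash(\forall\Phi,\Xi,f^1)(\exists n\in t(\Phi,\Xi,f))C(\Phi,\Xi,f,n)$, and setting $t(\Phi,\Xi,f):=\max_{i<|t(\Phi,\Xi,f)|}t(\Phi,\Xi,f)(i)$ provides $\MU(t(\Phi,\Xi))$ whenever $\Phi$ satisfies $(\forall A^1)\UORD(\Phi,A)$ with extensionality functional $\Xi$; this is precisely the second conjunct of \eqref{frood9}.

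For the first conjunct, $\MU(\mu)\di\UORD(s(\mu))$, I would mimic the argument deferred to Section~\ref{kulll}: $\paai\di\UORD^{+}$ is established in $\RCAO$ by using $\paai$ (via $(\mu^2)$-style reasoning internalized) to decide, for the given countable torsion-free $A$, which elements lie in the positive cone --- the point being that $A$ torsion-free is $\Pi^0_1$-type data, and $\paai$ lets us convert the standard description of non-torsion into a definite arithmetical statement so that Levi's construction of the ordering becomes effective and standard-extensional. Running this through Corollary~\ref{consresultcor2} with a leading $\mu^2$ in place of the standard quantifier over $f$ extracts the term $s$ so that $\RCAo\vdash(\forall\mu^2)[\MU(\mu)\di\UORD(s(\mu))]$.

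The main obstacle I anticipate is the construction of the pair $A_0,A_1$: one needs a single countable torsion-free abelian group whose set of linear orderings is rigid enough (essentially: has exactly two, related by negation, as $\mathbb{Z}$ does) that two $\approx_1$-indistinguishable codings can be forced onto the two different orderings, while simultaneously keeping both codings provably torsion-free in the internal theory so that $\UORD^{+}$ actually applies to them. Checking that the ``flip past level $m$'' coding genuinely gives $A_0\approx_1 A_1$ (agreement of $+$, $-$, and membership on all standard inputs) and yet provably forces $\Phi(A_0)\not\approx_1\Phi(A_1)$ --- i.e. that some \emph{standard} group element changes sign --- is the delicate combinatorial core, analogous to but more involved than the two-path tree argument for $\UWKL^{+}$. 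Everything after that step is the routine normal-form bookkeeping already illustrated in the proof of Theorem~\ref{proto}.
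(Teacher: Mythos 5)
Your proposal follows essentially the same route as the paper's proof: $\UORD^{+}\di\paai$ is established by taking a torsion-free $A$ and forming a second presentation $B$ via $B(i):=-A(i)$ once $f$ has hit a nonzero value below $\max\{A(i),-A(i)\}$, so that $A\approx_1 B$ (the inverse map preserves standardness) while the resulting positive cones can only meet at $0_A$, contradicting standard extensionality of $\Phi$; the first conjunct and the normal-form extraction then proceed exactly as in Theorem~\ref{proto}. The one small deviation is that the paper does not specialize to $\mathbb{Z}$ or $\mathbb{Z}^{(\omega)}$ but works with an arbitrary standard torsion-free abelian group, relying only on the remark that a linear order on an abelian group and its reverse have positive cones intersecting precisely in the identity, so the ``rigidity'' concern you raise is sidestepped.
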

\begin{proof}  
Given the similarity in syntactic structure between $\ORD$ and $\WKL$, the proof of $\paai\di \UORD^{+}$ in $\RCAO$ follows immediately from the proof of $\WKL \di \textsf{ORD}$  (\cite{simha}*{Lemma 2.3}) and the proof of Theorem \ref{proto} in Section \ref{kulll}.  The proof that $\UORD^{+}\di \paai$ proceeds as follows.  
Note that the ordering provided by $\ORD$ is not unique:  If `$\leq$' is an order on $A$, then `$\sqsubseteq$' obtained by $a\sqsubset b \equiv \neg(a\leq b)$ is also one, and the associated positive cones only intersect in the neutral element $0_{A}$.     
Now assume $\UORD^{+}$ and suppose that $\paai$ is false, i.e.\ there is standard $f^{1}$ such that $(\forall^{\st}n^{0})f(n)=0$ and $(\exists m^{0})f(m)\ne 0$.  Take a standard torsion-free abelian group $A^{1}=(a_{0}, a_{2}, a_{3}, \dots)$, and define the following standard group:  
\be\label{fruj}
B(i):=
\begin{cases}
-A(i) & (\exists n\leq \max\{ A(i), -A(i)\})f(n)\ne 0\\
~~A(i)& \text{otherwise},
\end{cases}
\ee
Note that the `inverse' operation `$-$' associated to $A$ is also standard, implying that $-a$ is standard if and only if $a\in A$ is.  
As a result, the modification via $f$ to $A$ in \eqref{fruj} does not change the standard part of $A$, i.e.\ we have $A\approx_{1}B$.  
However, we also have $\Phi(A)\not\approx_{1}\Phi(B)$, as \eqref{fruj} switches $A(i)$ and $-A(i)$ in the enumeration of $A$ for large enough $i$.  
In particular, the positive cone of $A$ only intersects the positive cone of $B$ in $0_{A}=0_{B}$ due to the `switch' taking place in the first clause of \eqref{fruj}.  
The previous contradiction yields $\UORD^{+}\di \paai$.  

\medskip

Finally, in light of the similarity in syntactic structure between $\UORD^{+}$ and $\UWKL^{+}$, one obtains \eqref{frood9} from $\UORD^{+}\asa\paai$ in the same way as one obtains \eqref{frood8} from $\UWKL^{+}\asa \paai$, and we are done.
\qed
\end{proof}  
Thus, \eqref{frood9} establishes the effective equivalence between $\WKL$ and $\ORD$, and we now study the effective equivalence between 
the \emph{contrapositions} of the latter, leading to \emph{quite} different results.  
Recall that the fan theorem, denoted $\FAN$, is the classical contraposition of weak K\"onig's lemma.  Similarly, let $\textsf{DRO}$ be the contraposition 
of $\ORD$ and consider the following explicit versions:
\begin{align}\label{UFAN}\tag{$\UFAN(\Phi)$}
(\forall T^{1}\leq_{1}1,g^{2})\big[(\forall \beta\leq_{1}1)&\overline{\beta}g(\beta)\not \in T \\
&\di (\forall \beta \leq_{1}1)(\exists i\leq \Phi(g))\overline{\beta}i\not\in T   \big].\notag
\end{align}
\begin{align}
(\forall A^{1}, h^{2})\big[
 (\forall X^{1})(\exists a, b, &c \in A)(a, b, c \leq h(X) \wedge a+c >_{X} b+c) \tag{$\UDRO(\Psi)$} \\
&   \di(\exists n^{0}, a\in A)( n,a \leq \Psi(h)\wedge n\times a =0_{A})  )\big]. \notag
\end{align}
We have the following theorem.
\begin{thm}\label{soareyouuuuuu}
From the proof of $\WKL\asa \ORD$ in $\RCA_{0}$ \(See \cite{simha}*{p.\ 179}\), terms $s, t$ can be extracted witnessing the \emph{explicit} equivalence $\FAN\asa \ORD$ in $\RCAo$:
\be\label{UKI}
(\forall \Phi^{3})[\UFAN(\Phi)\di \textsf{\textup{UDRO}}(s(\Phi))] \wedge (\forall \Psi^{3})[\textup{\textsf{UDRO}}(\Psi)\di \textsf{\textup{UFAN}}(t(\Psi))].  
\ee
\end{thm}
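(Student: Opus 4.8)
The plan is to mirror the proof strategy of Theorem~\ref{proto} and Theorem~\ref{proto2}, but now working with the \emph{classical contrapositions} directly, so that no Nonstandard Analysis enters at all: the input is the classical (first-order RM) proof of $\WKL\asa\ORD$ from \cite{simha}, and the output is a pair of system-$\textsf{T}$ terms witnessing the explicit equivalence $\FAN\asa\ORD$. The key observation is that $\UFAN(\Phi)$ and $\UDRO(\Psi)$ are, up to renaming of bound variables, in the same syntactic shape as the contrapositions of the matrices appearing in $\UWKL$ and $\UORD$; hence the term-extraction machinery of Corollary~\ref{consresultcor2} applies once we phrase the two implications as provable sentences of the appropriate form.

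First I would treat the forward direction $(\forall\Phi^{3})[\UFAN(\Phi)\di\UDRO(s(\Phi))]$. The idea is to take Simpson's proof that $\WKL$ implies $\ORD$ (\cite{simpson2}*{III.6}, as used in \cite{simha}*{Lemma 2.3}), which for a torsion-free countable abelian group $A$ builds a binary tree $T_{A}$ whose infinite paths code positive cones, and reads off an order from such a path. Contraposing, Simpson's proof that $\neg\ORD(A)\di\neg\WKL(T_{A})$ is effective: from a group $A$ with no order (witnessed by $h$ as in $\UDRO$) one builds the tree $T_{A}$ together with the bounding data needed so that $\UFAN(\Phi)$ applied to $T_{A}$ and a suitable functional $g$ built from $h$ yields a uniform bound, which in turn bounds the torsion witness $n\times a=0_{A}$. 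All the constructions involved — forming $T_{A}$ from $A$, forming the companion functional $g$ from $h$, and reading the torsion witness off the output of $\Phi$ — are given by primitive recursive terms in the sense of system $\textsf{T}$; assembling them gives the term $s$. The reverse direction $(\forall\Psi^{3})[\UDRO(\Psi)\di\UFAN(t(\Psi))]$ is handled symmetrically using the other half of Simpson's equivalence, namely the effective proof that $\ORD\di\WKL$ (which goes via building, from a tree $T$ with no infinite path, a torsion-free group whose orderability fails).

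Concretely, the cleanest route is to run exactly the argument of Theorem~\ref{proto}/\ref{proto2}: first establish in $\RCAO$ the nonstandard equivalence $\UDRO^{+}\asa\paai$ (where $\UDRO^{+}$ is the evident $^{+}$-variant, standard extensional and uniform, obtained from $\textsf{DRO}$ the way $\UORD^{+}$ is obtained from $\ORD$), using the group $B$ from \eqref{fruj} for one direction and the $\WKL\di\ORD$ proof for the other; then apply Corollary~\ref{consresultcor2} to each implication to extract terms. Since $\FAN$ and $\ORD$ (via $\textsf{DRO}$) are classically the contrapositions of $\WKL$ and $\ORD$, and since $\paai$ is self-contraposing in the relevant normal-form sense, the extracted terms $s,t$ witness \eqref{UKI} directly, with the type-$3$ functionals $\Phi,\Psi$ playing the role previously played by the type-$2$ witnesses $\mu,\Phi$. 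One should note that here, unlike in \eqref{frood8}, no extensionality functional $\Xi$ appears in \eqref{UKI}: this is because $\FAN$ and $\textsf{DRO}$ are $\Pi^{1}_{2}$-statements whose witnessing functionals act on the \emph{discontinuous} data $g,h$ only through bounded search, so standard extensionality of the witness is automatic and need not be assumed or carried along.

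The main obstacle I expect is bookkeeping the bounds in the contrapositive constructions. In the $\WKL\di\ORD$ direction one normally extracts an infinite path and uses it globally; contraposing, one must verify that Simpson's/Hatzikiriakou's construction of $T_{A}$ is \emph{uniform and effectively bounded} in the sense demanded by $\UFAN$ — i.e.\ that the level at which the tree $T_{A}$ dies can be controlled by a functional built primitive-recursively from $h$ and $A$ — and likewise that the torsion witness extracted in $\UDRO$ can be bounded by $\Psi$ applied to that functional. This is exactly the kind of tedious but routine verification flagged after Corollary~\ref{consresultcor}; the syntactic parallelism with the $\UWKL^{+}\asa\paai$ proof guarantees it goes through, but the precise shape of the companion functionals $g=g(h)$ and the bounds has to be checked by hand against \cite{simha}*{p.\ 179}, and that is where the real work lies.
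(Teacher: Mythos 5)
Your first two paragraphs describe the right kind of idea — contrapose the $\WKL\asa\ORD$ proof from \cite{simha} and verify that the constructions are uniformly and primitively-recursively bounded — but your third paragraph, where you make the plan concrete, contains a genuine error that would derail the whole argument. You propose to establish the nonstandard equivalence $\UDRO^{+}\asa\paai$ in $\RCAO$ and then run Corollary~\ref{consresultcor2} on it, ``exactly as in Theorem~\ref{proto}/\ref{proto2}.'' That cannot be right, and the remark the paper makes immediately after the theorem tells you why: $(\exists\Phi)\UFAN(\Phi)$ is conservative over $\WKL$, whereas $\paai$ translates (after term extraction) into $(\mu^{2})$, i.e.\ arithmetical comprehension. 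Extracting terms from $\UDRO^{+}\asa\paai$ would give you an effective equivalence of the type of \eqref{frood9} — between $\UORD$ (or $\UDRO$) and $\MU(\cdot)$ — not the much weaker effective equivalence $\UFAN\asa\UDRO$ of \eqref{UKI}. There is no normal-form ``self-contraposition'' of $\paai$ that turns a $(\mu^{2})$-strength term into a $\FAN$-strength bounded-search term; the strength mismatch is a hard obstruction, not bookkeeping.

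What the paper actually does is not pass through $\paai$ at all. It writes down two genuinely weak nonstandard principles \eqref{kral} and \eqref{kral2} — these are nonstandard \emph{uniform $\FAN$} and nonstandard \emph{uniform $\textsf{DRO}$}, each quantifying over standard $T,g$ resp.\ standard $A,h$ and asserting a $(\exists^{\st}k)$-bound in the consequent — and proves their equivalence in $\RCAO$ directly by porting Hatzikiriakou--Simpson's argument (constructing $T^{A}$ and $P^{A}_{\sigma}$ inside $\RCAO$, checking they are standard when $A$ is, applying $\Pi^{0}_{1}$-induction relativised to `st', and reading the torsion bound off the path data through a level controlled by $h$). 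Only then is Corollary~\ref{consresultcor2} applied, once to each direction, yielding \eqref{UKI}. Because \eqref{kral} and \eqref{kral2} are pointwise normal forms rather than $^{+}$-statements with a standard-extensionality conjunct, no extensionality functional $\Xi$ enters — your instinct that $\Xi$ is absent is right, but the structural reason is that the nonstandard equivalence one extracts from here is of a different shape, not because $\FAN$/$\textsf{DRO}$ act on the data ``only through bounded search.'' Also note your opening claim that ``no Nonstandard Analysis enters at all'' is not what the paper does: the intermediate equivalence is still proved in the nonstandard system $\RCAO$; what changes relative to Theorems~\ref{proto}/\ref{proto2} is that $\paai$ is \emph{not} invoked. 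To repair your proposal, drop the detour through $\UDRO^{+}\asa\paai$ entirely, and instead set up the two normal-form nonstandard principles corresponding to $\UFAN$ and $\UDRO$ and prove those equivalent in $\RCAO$.
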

\begin{proof}
First of all, assume the following two theorems are equivalent in $\RCAO$.
\begin{align}
(\forall^{\st} T^{1}\leq_{1}1,g^{2})\big[(\forall \beta\leq_{1}1)&\overline{\beta}g(\beta)\not \in T\label{kral} \\
&\di (\exists^{\st}k)(\forall \beta \leq_{1}1)(\exists i\leq k)\overline{\beta}i\not\in T   \big].\notag
\end{align}
\begin{align}
(\forall^{\st} A^{1}, h^{2})\big[
 (\forall X^{1})(\exists a, b, &c \in A)(a, b, c \leq h(X) \wedge a+c >_{X} b+c)\label{kral2} \\
&   \di (\exists^{\st}m)(\exists n^{0}, a\in A)( n,a \leq m\wedge n\times a =0_{A})  )\big]. \notag
\end{align}
Applying Corollary \ref{consresultcor2} to the normal forms of `\eqref{kral}$\di$\eqref{kral2}' and `\eqref{kral2}$\di$\eqref{kral}' now immediately yields \eqref{UKI}.
It is a straightforward verification that the proof that $\ORD\asa \WKL$ in \cite{simha}*{Theorem 2.5} can be (easily) modified to a proof of `$\eqref{kral}\asa \eqref{kral2}$' in $\RCAO$.  For completeness, we prove one direction of the latter implication in Section \ref{fappendix}.  
\qed
\end{proof}
The principle $(\exists \Phi)\UFAN(\Phi)$ is conservative over weak K\"onig's lemma by \cite{kohlenbach2}*{Prop.~3.15}, while $(\mu^{2})$ is essentially arithmetical comprehension.  Hence, there is a big difference in strength between $\UORD$ and $(\exists \Psi)\UDRO(\Psi)$, which can be explained by the different constructive status of $\ORD$ and $\textsf{DRO}$ (See \cite{kohlenbach2}*{p.\ 294}).   

\medskip

Finally, we show that from certain effective implications, one can re-obtain the original nonstandard theorem.  % from which they were obtained.  
To this end, consider:  % the following implication:
\be\label{pm}\tag{\textsf{HER}$(i,o)$}
(\forall \Phi , \Xi, f)\big[ \UWKL_{\pw}(\Phi, i) \wedge \textsf{\textup{EXT}}_{\pw}(\Phi, \Xi, i)\di  \MU_{\pw}(o(\Phi, \Xi,f),f)
\ee
where $ \MU_{\pw}(\mu,f)$ is $\MU(\mu)$ with the leading universal quantifier dropped, where $\UWKL_{\pw}(\Phi, i)$  is $(\forall T^{1}\in i(\Phi, \Xi, f)(1))\UWKL(\Phi, T)$, and where $\textsf{\textup{EXT}}_{\pw}(\Phi, \Xi, i)$ is 
\[
(\forall U, S, k \in i(\Phi, \Xi, f)(2))(\overline{S}\Xi(U, S, k)=\overline{U}\Xi(U, S, k) \di \overline{\Phi(U)}k=\overline{\Phi(S)}k.
\]
Intuitively speaking \textsf{HER}$(i,o)$ expresses that in order to compute Feferman's mu-operator via $o$ \emph{for one single $f^{1}$}, 
one needs to supply $\Phi$ which satisfies $\UWKL$ \emph{but only for the finite sequence of trees given by $i$}, and similar for $\Xi$.   
In other words, \textsf{HER}$(i,o)$ is a `pointwise' version of the second conjunct of \eqref{frood8}.  
\begin{thm}
From $\RCAO\vdash\UWKL^{+}\di \paai$, closed terms $i, o$ can be extracted such that $\RCAo\vdash \textup{\textsf{HER}}(i, o)$.  
If for closed terms $i, o$, $\RCAo$ proves $\textup{\textsf{HER}}(i, o)$, then the latter proof yields a proof of $\UWKL^{+}\di \paai$ in $\RCAO$.    
\end{thm}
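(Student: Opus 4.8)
The plan for the first claim is to rerun the term‑extraction argument of Theorem~\ref{proto}, taking care to keep the quantifiers $(\forall^{\st}T^{1})$ and $(\forall^{\st}U^{1},S^{1},k^{0})$ in the antecedent of $\UWKL^{+}$ \emph{standard} rather than weakening them to internal quantifiers as is done there: it is precisely the standardness of these two quantifiers that Corollary~\ref{consresultcor2} turns into the finite lists of trees and triples supplied by $i$. In detail, exactly as in the proof of Theorem~\ref{proto}, from $\RCAO\vdash\UWKL^{+}\di\paai$ one obtains in $\RCAO$ the formula \[(\forall^{\st}\Phi,\Xi)\big[[(\forall^{\st}T^{1})\UWKL(\Phi,T)\wedge(\forall^{\st}U^{1},S^{1},k^{0})A(U,S,\Xi(U,S,k),k,\Phi)]\di(\forall^{\st}f^{1})(\exists^{\st}n^{0})B(f,n)\big],\] where $A$ comes from \eqref{kurve} (with $\Xi$ playing the role of an extensionality functional) and $B$ is as in the proof of Theorem~\ref{proto}. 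Pulling $(\forall^{\st}f)$ to the front, merging the two conjoined universal quantifiers of the antecedent, and then pulling the resulting $(\forall^{\st}\cdot)$ of the antecedent out as $(\exists^{\st}\cdot)$ and the $(\exists^{\st}n)$ of the consequent out --- via the $\RCAO$-provable equivalences $[(\forall^{\st}\underline{z})\theta(\underline{z})\di X]\asa(\exists^{\st}\underline{z})[\theta(\underline{z})\di X]$ and $[X\di(\exists^{\st}n)\beta(n)]\asa(\exists^{\st}n)[X\di\beta(n)]$ --- the displayed formula is equivalent in $\RCAO$ to a normal form \[(\forall^{\st}\Phi,\Xi,f^{1})(\exists^{\st}T_{0}^{1},U_{1}^{1},S_{1}^{1},k_{1}^{0},n^{0})\,D(\Phi,\Xi,f,T_{0},U_{1},S_{1},k_{1},n),\] where $D$ abbreviates the internal formula $[\UWKL(\Phi,T_{0})\wedge A(U_{1},S_{1},\Xi(U_{1},S_{1},k_{1}),k_{1},\Phi)]\di B(f,n)$. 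Corollary~\ref{consresultcor2} then yields a closed term $t\in\mathcal{T}^{*}$ with $\RCAo\vdash(\forall\Phi,\Xi,f)(\exists(T_{0},U_{1},S_{1},k_{1},n)\in t(\Phi,\Xi,f))\,D(\Phi,\Xi,f,T_{0},U_{1},S_{1},k_{1},n)$. I expect this normal-form bookkeeping to be the one step needing care; the temptation --- harmless in Theorem~\ref{proto} --- to replace $(\forall^{\st}T^{1})$ by $(\forall T^{1})$ must be resisted, as otherwise the extracted term carries no information about which finitely many trees are used.

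The terms $i$ and $o$ are then read off from $t$: let $i(\Phi,\Xi,f)(1)$ collect the trees witnessing $(\exists^{\st}T_{0})$ in $t(\Phi,\Xi,f)$, let $i(\Phi,\Xi,f)(2)$ collect the triples $(U_{1},S_{1},k_{1})$ occurring in $t(\Phi,\Xi,f)$, let $N(\Phi,\Xi,f)$ be the maximum of the numbers witnessing $(\exists^{\st}n)$ in $t(\Phi,\Xi,f)$, and let $o(\Phi,\Xi,f)$ be the type-$2$ functional sending $g^{1}$ to the least $m\le N(\Phi,\Xi,f)$ with $g(m)=0$ (and to $0$ if there is none); all of these are plainly given by closed terms of $\mathcal{T}^{*}$. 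To check $\RCAo\vdash\textsf{HER}(i,o)$, argue inside $\RCAo$: given $\Phi,\Xi,f$ with $\UWKL_{\pw}(\Phi,i)\wedge\textsf{EXT}_{\pw}(\Phi,\Xi,i)$, choose a tuple $(T_{0},U_{1},S_{1},k_{1},n)\in t(\Phi,\Xi,f)$ satisfying $D$. Since $T_{0}$ is among the trees in $i(\Phi,\Xi,f)(1)$, $\UWKL_{\pw}$ gives $\UWKL(\Phi,T_{0})$; since $(U_{1},S_{1},k_{1})$ is among the triples in $i(\Phi,\Xi,f)(2)$, the corresponding conjunct of $\textsf{EXT}_{\pw}$ is literally $A(U_{1},S_{1},\Xi(U_{1},S_{1},k_{1}),k_{1},\Phi)$. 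Hence the antecedent of $D$ holds, so does $B(f,n)$, and since $n\le N(\Phi,\Xi,f)$ we get $(\exists x)f(x)=0\di(\exists m\le N(\Phi,\Xi,f))f(m)=0$; therefore $o(\Phi,\Xi,f)(f)$ is a zero of $f$ whenever $f$ has one, which is exactly $\MU_{\pw}(o(\Phi,\Xi,f),f)$.

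For the converse, suppose closed terms $i,o$ satisfy $\RCAo\vdash\textsf{HER}(i,o)$; since $\textsf{HER}(i,o)$ is internal and $\RCAO$ extends $\RCAo$, also $\RCAO\vdash\textsf{HER}(i,o)$. Working in $\RCAO$, assume $\UWKL^{+}$ and fix the standard witness $\Phi$, so $(\forall^{\st}T)\UWKL(\Phi,T)$ and $\Phi$ is standard extensional; by \eqref{kurve}, $\HAC_{\INT}$ and the easy internal fact that $A(T,S,N,k,\Phi)$ implies $A(T,S,N',k,\Phi)$ for all $N'\ge N$, one obtains a standard $\Xi$ with $(\forall^{\st}T,S,k)A(T,S,\Xi(T,S,k),k,\Phi)$. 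Fix a standard $f$. Since $i$ and $o$ are closed terms and $\Phi,\Xi,f$ are standard, $i(\Phi,\Xi,f)$ and $o(\Phi,\Xi,f)$ are standard, and hence every element of the standard finite sequences $i(\Phi,\Xi,f)(1)$ and $i(\Phi,\Xi,f)(2)$ is standard. Consequently $\UWKL_{\pw}(\Phi,i)$ holds --- it is a finite conjunction of instances of $(\forall^{\st}T)\UWKL(\Phi,T)$ at standard trees --- and $\textsf{EXT}_{\pw}(\Phi,\Xi,i)$ holds --- a finite conjunction of instances of $(\forall^{\st}T,S,k)A(T,S,\Xi(T,S,k),k,\Phi)$ at standard triples. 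By $\textsf{HER}(i,o)$ we get $\MU_{\pw}(o(\Phi,\Xi,f),f)$, i.e.\ $(\exists x)f(x)=0\di f(n_{f})=0$ with $n_{f}:=o(\Phi,\Xi,f)(f)$ a standard number, which is just $B(f,n_{f})$. As $f$ was an arbitrary standard function, $(\forall^{\st}f)(\exists^{\st}n)B(f,n)$ holds, which is the normal form of $\paai$ used in the proof of Theorem~\ref{proto}; hence $\paai$, and so $\RCAO\vdash\UWKL^{+}\di\paai$.
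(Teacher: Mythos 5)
Your proof is correct and follows essentially the same route as the paper's: extract the term $t$ by applying Corollary~\ref{consresultcor2} to the normal form of $\UWKL^{+}\di\paai$ \emph{without} weakening the standard quantifiers in the antecedent, read off $i$ and $o$ from $t$, and for the converse use that closed terms are standard together with the monotonicity observation. You have actually tidied up two points the paper glosses over: you make $o(\Phi,\Xi,f)$ a genuine type-$2$ object so that $\MU_{\pw}(o(\Phi,\Xi,f),f)$ typechecks (the paper's ``maximum of entries pertaining to $n$'' is a number), and in the converse direction you explicitly invoke $\HAC_{\INT}$ plus the upward monotonicity of $A(T,S,N,k,\Phi)$ in $N$ to produce a standard extensionality functional $\Xi$ from the second conjunct of $\UWKL^{+}$, a step the paper leaves implicit when it asserts that $\UWKL^{+}$ implies $(\exists^{\st}\Phi,\Xi)[\UWKL_{\pw}(\Phi,i)\wedge\textsf{EXT}_{\pw}(\Phi,\Xi,i)]$.
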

\begin{proof}
For the first part of the theorem, consider the proof of Theorem \ref{proto}, in particular the sentence below \eqref{kurve} as follows:
\[
(\forall^{\st}\Phi, \Xi)\big[ [(\forall^{\st}T^{1})\UWKL(\Phi, T)\wedge (\forall^{\st} U^{1}, S^{1}, k^{0})A(U, S, \Xi(U, S, k), k)] 
 \di     (\forall^{\st}f^{1})(\exists^{\st}n)B(f,n)\big]. 
\]
Bringing the previous into normal form, \emph{without} dropping `st' predicates, yields
\[
(\forall^{\st}\Phi, \Xi, f^{1})(\exists^{\st} T, U, S, k, n )\big[ [\UWKL(\Phi, T)\wedge A(U, S, \Xi(U, S, k), k)]  \di   B(f,n)\big], 
\]
where $D(\Phi, \Xi, f, T, U, S, k, n)$ abbreviates the formula in big square brackets.  
Applying Corollary \ref{consresultcor2} to the fact that $\RCAO$ proves 
\[
(\forall^{\st}\Phi, \Xi, f^{1})(\exists^{\st} T, U, S, k, n )D(\Phi, \Xi, f, T, U, S, k, n)
\]
yields a term $t$ such that $\RCAo$ proves:
\be\label{forgot}
(\forall \Phi, \Xi, f^{1})(\exists  T, U, S, k, n \in t(\Phi, \Xi, f))D(\Phi, \Xi, f, T, U, S, k, n).
\ee
Now define the term $o$ as the maximum of all entries of $t$ pertaining to $n$, and define $i(\Phi, \Xi, f)(1)$ as the sequence of all entries of $t$ pertaining to $T$, and the same for  $i(\Phi, \Xi, f)(2)$ and $U, S, k$.  With these definitions, \eqref{forgot} implies $\textsf{HER}(i,o)$.  

\medskip

For the second part of the proof, by the second standardness axiom from Definition \ref{debs}, terms like $i, o$ are standard in $\RCAO$.  
Hence, if $\RCAo\vdash\textsf{HER}(i,o)$, then $\RCAO$ proves $\textsf{HER}(i,o)\wedge \st(i)\wedge \st(o)$.  
Thus, for standard $\Phi, \Xi, f$, $i(\Phi, \Xi, f)$ and $o(\Phi, \Xi, f)$ are standard, yielding that $\MU_{\pw}(o(\Phi, \Xi,f),f)$ for standard inputs implies 
$(\exists n)f(n)= 0\di (\exists^{\st}m)f(m)=0$.  Hence, $\textsf{HER}(i,o)$ yields     
\[
(\forall^{\st} \Phi , \Xi, f)\big[ \UWKL_{\pw}(\Phi, i) \wedge \textsf{\textup{EXT}}_{\pw}(\Phi, \Xi, i)\di [(\exists n)f(n)= 0\di (\exists^{\st}m)f(m)=0]\big],
\]
and bring two of the standard universal quantifiers inside as follows:
\[
(\forall^{\st} f)\big[(\exists^{\st}\Phi, \Xi) [\UWKL_{\pw}(\Phi, i) \wedge \textsf{\textup{EXT}}_{\pw}(\Phi, \Xi, i)]\di [(\exists n)f(n)= 0\di (\exists^{\st}m)f(m)=0]\big],
\]
and note that $\UWKL^{+}$ implies the antecedent of the previous, i.e.\ 
\[
(\forall^{\st}  f)\big[\UWKL^{+}\di [(\exists n)f(n)= 0\di (\exists^{\st}m)f(m)=0]\big],
\]
which immediately yields $\UWKL^{+}\di \paai$.  
\qed
\end{proof}
We refer to $\textup{\textsf{HER}}(i, o)$ as the \emph{Herbrandisation} of the implication $\UWKL^{+}\di \paai$.  
As suggested by the notation, the Herbrandisation is a `pointwise' version of the second conjunct of \eqref{frood8}.  
We could obtain Hebrandisations of e.g.\ \eqref{frood9}, but do not go into details due to space constraints.  
\section{Bibliography}
%\begin{bibdiv}
\begin{biblist}
\bib{avi2}{article}{
  author={Avigad, Jeremy},
  author={Feferman, Solomon},
  title={G\"odel's functional \(``Dialectica''\) interpretation},
  conference={ title={Handbook of proof theory}, },
  book={ publisher={North-Holland}, },
  date={1998},
  pages={337--405},
}

\bib{brie}{article}{
  author={van den Berg, Benno},
  author={Briseid, Eyvind},
  author={Safarik, Pavol},
  title={A functional interpretation for nonstandard arithmetic},
  journal={Ann. Pure Appl. Logic},
  volume={163},
  date={2012},
  pages={1962--1994},
}

\bib{simha}{article}{
  author={Hatzikiriakou, Kostas},
  author={Simpson, Stephen G.},
  title={${\rm WKL}_0$ and orderings of countable abelian groups},
  conference={ date={1987}, },
  book={ series={Contemp. Math.}, volume={106}, },
  date={1990},
  pages={177--180},
}

\bib{kohlenbach2}{article}{
  author={Kohlenbach, Ulrich},
  title={Higher order reverse mathematics},
  conference={ title={Reverse mathematics 2001}, },
  book={ series={Lect. Notes Log.}, volume={21}, publisher={ASL}, },
  date={2005},
  pages={281--295},
}

\bib{kooltje}{article}{
  author={Kohlenbach, Ulrich},
  title={On uniform weak K\"onig's lemma},
  journal={Ann. Pure Appl. Logic},
  volume={114},
  date={2002},
  number={1-3},
  pages={103--116},
}

\bib{wownelly}{article}{
  author={Nelson, Edward},
  title={Internal set theory: a new approach to nonstandard analysis},
  journal={Bull. Amer. Math. Soc.},
  volume={83},
  date={1977},
  number={6},
  pages={1165--1198},
}

\bib{samGH}{article}{
  author={Sanders, Sam},
  title={The Gandy-Hyland functional and a hitherto unknown computational aspect of Nonstandard Analysis},
  year={2015},
  journal={Submitted, \url {http://arxiv.org/abs/1502.03622}},
}

\bib{samzoo}{article}{
  author={Sanders, Sam},
  title={The taming of the Reverse Mathematics zoo},
  year={2015},
  journal={Submitted, \url {http://arxiv.org/abs/1412.2022}},
}

\bib{sambon}{article}{
  author={Sanders, Sam},
  title={The unreasonable effectiveness of Nonstandard Analysis},
  year={2015},
  journal={Submitted, \url {http://arxiv.org/abs/1508.07434}},
}

\bib{fegas}{article}{
  author={Sanders, Sam},
  title={Non-standard Nonstandard Analysis and a hitherto unknown computational aspect of Nonstandard Analysis},
  year={2015},
  journal={Submitted, Available from arXiv: \url {http://arxiv.org/abs/1509.00282}},
}

\bib{simpson1}{collection}{
  title={Reverse mathematics 2001},
  series={Lecture Notes in Logic},
  volume={21},
  editor={Simpson, Stephen G.},
  publisher={ASL},
  place={La Jolla, CA},
  date={2005},
  pages={x+401},
}

\bib{simpson2}{book}{
  author={Simpson, Stephen G.},
  title={Subsystems of second order arithmetic},
  series={Perspectives in Logic},
  publisher={CUP},
  date={2009},
  pages={xvi+444},
}

%\bibselect{allkeida}
\end{biblist}
%\end{bibdiv}

%\newpage
\appendix

\section{Technical Appendix}

\subsection{Proof of Corollary \ref{consresultcor}}\label{apie}
In this section, we prove Corollary \ref{consresultcor}, as follows.
\begin{cor}\label{consresultcor44}
If for internal $\psi$, $\Phi(\underline{a})\equiv(\forall^{\st}\underline{x})(\exists^{\st}\underline{y})\psi(\underline{x},\underline{y}, \underline{a})$ satisfies \eqref{antecedn}, then 
$(\forall \underline{x})(\exists \underline{y}\in t(\underline{x}))\psi(\underline{x},\underline{y},\underline{a})$ is proved in the corresponding formula \eqref{consequalty}.  
\end{cor}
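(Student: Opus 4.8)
The plan is to reduce the statement to a direct application of Theorem~\ref{consresult}, by verifying that the translation $(\cdot)^{S_{\st}}$ acts as the identity on \emph{normal forms}: whenever $\Phi(\underline a)\equiv(\forall^{\st}\underline x)(\exists^{\st}\underline y)\psi(\underline x,\underline y,\underline a)$ with $\psi$ internal, one has $\Phi(\underline a)^{S_{\st}}\equiv\Phi(\underline a)$. Granting this, Theorem~\ref{consresult} applies with $\varphi:=\psi$ and the very same tuples $\underline x,\underline y$, so that the conclusion \eqref{consequalty} it provides is literally $(\forall\underline x)(\exists\underline y\in\underline t(\underline x))\psi(\underline x,\underline y,\underline a)$, which is exactly what is to be proved; the same argument with Corollary~\ref{consresultcor2} in place of Theorem~\ref{consresult} gives the $\RCAO$/$\RCAo$ version used elsewhere in the paper.

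First I would reproduce the defining clauses (i)--(v) of $(\cdot)^{S_{\st}}$ from \cite{brie}*{Def.~7.1}, clause~(i) being
\begin{equation}\label{dombu}
\varphi^{S_{\st}}:\equiv\varphi\ \text{ for internal atomic }\varphi,\qquad (\st(x^{\sigma}))^{S_{\st}}:\equiv(\exists^{\st}y^{\sigma})(y=_{\sigma}x),
\end{equation}
and clauses~(ii)--(v) treating $\wedge,\vee,\di$ and the four quantifiers $\forall z,\exists z,\forall^{\st}z,\exists^{\st}z$ componentwise, Herbrandising existential witnesses into finite sequences where required. The verification then has two steps. \emph{Step 1.} By a routine induction on internal formulas, $\chi^{S_{\st}}\equiv\chi$ for every internal $\chi$: the base case is the first clause of \eqref{dombu}, and in the induction step the clauses for $\wedge,\vee,\di,\forall z,\exists z$ introduce no new standard quantifiers, since the blocks they manipulate are empty. \emph{Step 2.} Build $\Phi(\underline a)$ up from its internal matrix $\psi$: processing the block $\exists^{\st}\underline y$ by applying the $\exists^{\st}$-clause once per variable of $\underline y$ merely prepends that variable to the standard existential block and leaves the (still empty) standard universal block alone; processing $\forall^{\st}\underline x$ by applying the $\forall^{\st}$-clause once per variable of $\underline x$ merely merges that variable into the standard universal block; and $\underline a$ stays free throughout. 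Hence $\Phi(\underline a)^{S_{\st}}\equiv(\forall^{\st}\underline x)(\exists^{\st}\underline y)\psi(\underline x,\underline y,\underline a)\equiv\Phi(\underline a)$.

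The one point that needs care --- and what makes the verification ``tedious'' --- is the Herbrandisation built into the general clauses: I would check that every sequence-forming/Skolemising operator occurring in (ii)--(v) collapses to a no-op precisely when the relevant standard quantifier block is empty, which is the situation met at every stage of Step~2 (the $\exists^{\st}$ steps act on a formula whose standard universal block is empty, and the $\forall^{\st}$ steps only merge adjacent universal quantifiers, so no Skolemisation occurs). This is what yields \emph{syntactic} identity rather than mere provable equivalence --- in particular, the existential witnesses $\underline y$ of the normal form are \emph{not} turned into functionals of $\underline x$. A further trivial observation keeps the $\st$-clause of \eqref{dombu} out of play: $\st$ never occurs in the internal matrix $\psi$, so the higher-type equality appearing there is never triggered. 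Modulo this bookkeeping the corollary is immediate from Theorem~\ref{consresult}.
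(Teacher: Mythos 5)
Your overall strategy is exactly the paper's: reduce the corollary to the claim that the interpretation $(\cdot)^{S_{\st}}$ is the identity on normal forms, then invoke Theorem~\ref{consresult} directly. Your Step~1 (internal formulas are fixed by $S_{\st}$) and the final conclusion are both correct. However, there is a concrete gap in Step~2: you posit that \cite{brie}*{Def.\ 7.1} has explicit clauses for $\wedge,\di,\forall^{\st}z,\exists^{\st}z$ and argue by applying an ``$\exists^{\st}$-clause'' and a ``$\forall^{\st}$-clause'' to prepend quantifiers to the appropriate block. No such clauses exist. The definition is given only on the primitives: (i) atomic internal $\psi^{S_{\st}}:=\psi$; (ii) $(\st(z))^{S_{\st}}:=(\exists^{\st}x)(z=x)$; (iii) $\neg$; (iv) $\vee$; (v) $\forall z$. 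The external quantifiers $\forall^{\st}$ and $\exists^{\st}$ (and $\wedge$, $\di$) are \emph{abbreviations} and must be unfolded, so that $(\exists^{\st}\underline y)\psi_{0}$ is computed as $\big(\neg(\forall\underline y)(\neg\st(\underline y)\vee\neg\psi_{0})\big)^{S_{\st}}$, passing through the $\st$-, $\vee$-, $\forall$- and (twice) the $\neg$-clause, each of which introduces a genuine Herbrand functional (for instance the $\underline W$ appearing in $(\forall^{\st}\underline W)(\exists^{\st}\underline x)(\forall\underline w\in\underline W[\underline x])\neg(\underline w=\underline y)$) that must then be argued away. Your remark that the ``Skolemising operator\ldots collapses to a no-op'' when the relevant block is empty is the right instinct, but it is applied to a translation that does not match \cite{brie}*{Def.\ 7.1}, so the reduction ``apply the $\exists^{\st}$-clause variable-by-variable'' does not literally go through; the tedium the paper refers to is precisely the calculation that these introduced Herbrand terms simplify out (e.g.\ $(\forall^{\st}\underline W)(\exists^{\st}\underline x)(\forall\underline w\in\underline W[\underline x])\neg(\underline w=\underline y)$ collapses to $(\forall^{\st}\underline w)(\underline w\ne\underline y)$, and the final pass for $(\forall^{\st}\underline x)$ collapses by instantiating $\underline x:=\underline w'$). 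To repair the proof, you would reproduce the actual five clauses and then carry out that unfolding, as the appendix does.
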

\begin{proof}
Clearly, if for $\psi$ and $\Phi$ as given we have $\Phi(\underline{a})^{S_{\st}}\equiv \Phi(\underline{a})$, then the corollary follows immediately from the theorem.  
A tedious but straightforward verification using the clauses (i)-(v) in \cite{brie}*{Def.\ 7.1} establishes that indeed $\Phi(\underline{a})^{S_{\st}}\equiv \Phi(\underline{a})$.  
For completeness, we now list these five inductive clauses and perform this verification.  

\medskip

Hence, if $\Phi(\underline{a})$ and $\Psi(\underline{b})$  in the language of $\P$ have the following interpretations
\be\label{dombu}
\Phi(\underline{a})^{S_{\st}}\equiv (\forall^{\st}\underline{x})(\exists^{\st}\underline{y})\varphi(\underline{x},\underline{y},\underline{a}) \textup{ and } \Psi(\underline{b})^{S_{\st}}\equiv (\forall^{\st}\underline{u})(\exists^{\st}\underline{v})\psi(\underline{u},\underline{v},\underline{b}),
\ee
then they interact as follows with the logical connectives by \cite{brie}*{Def.\ 7.1}:
\begin{enumerate}[(i)]
\item $\psi^{S_{\st}}:=\psi$ for atomic internal $\psi$.  
\item$ \big(\st(z)\big)^{S_{\st}}:=(\exists^{\st}x)(z=x)$.
\item $(\neg \Phi)^{S_{\st}}:=(\forall^{\st} \underline{Y})(\exists^{\st}\underline{x})(\forall \underline{y}\in \underline{Y}[\underline{x}])\neg\varphi(\underline{x},\underline{y},\underline{a})$.  
\item$(\Phi\vee \Psi)^{S_{\st}}:=(\forall^{\st}\underline{x},\underline{u})(\exists^{\st}\underline{y}, \underline{v})[\varphi(\underline{x},\underline{y},\underline{a})\vee \psi(\underline{u},\underline{v},\underline{b})]$
\item $\big( (\forall z)\Phi \big)^{S_{\st}}:=(\forall^{\st}\underline{x})(\exists^{\st}\underline{y})(\forall z)(\exists \underline{y}'\in \underline{y})\varphi(\underline{x},\underline{y}',z)$
\end{enumerate}
Hence, fix $\Phi_{0}(\underline{a})\equiv(\forall^{\st}\underline{x})(\exists^{\st}\underline{y})\psi_{0}(\underline{x},\underline{y}, \underline{a})$ with internal $\psi_{0}$, and note that $\phi^{S_{\st}}\equiv\phi$ for any internal formula.  
We have $[\st(\underline{y})]^{S_{\st}}\equiv (\exists^{\st} \underline{w})(\underline{w}=\underline{y})$ and also 
\[
[\neg\st(\underline{y})]^{S_{\st}}\equiv (\forall^{\st} \underline{W} ) (\exists^{\st}\underline{x})(\forall \underline{w}\in \underline{W}[\underline{x}])\neg(\underline{w}=\underline{y})\equiv (\forall^{\st}\underline{w})(\underline{w}\ne \underline{y}).  
\]    
Hence, $[\neg\st(\underline{y})\vee\neg \psi_{0}(\underline{x}, \underline{y}, \underline{a})]^{S_{\st}}$ is just $(\forall^{\st}\underline{w})[(\underline{w}\ne \underline{y}) \vee \neg \psi_{0}(\underline{x}, \underline{y}, \underline{a})]$, and 
\[
\big[(\forall \underline{y})[\neg\st(\underline{y})\vee \neg\psi_{0}(\underline{x}, \underline{y}, \underline{a})]\big]^{S_{\st}}\equiv
 (\forall^{\st}\underline{w})(\exists^{\st}\underline{v})(\forall \underline{y})(\exists \underline{v}'\in \underline{v})[\underline{w}\ne\underline{y}\vee \neg\psi_{0}(\underline{x}, \underline{y}, \underline{a})].
\]
which is just $(\forall^{\st}\underline{w})(\forall \underline{y})[(\underline{w}\ne \underline{y}) \vee \neg\psi_{0}(\underline{x}, \underline{y}, \underline{a})]$.  Furthermore, we have
\begin{align*}
\big[(\exists^{\st}y)\psi_{0}(\underline{x}, \tup y, \tup a)\big]^{S_{\st}}&\equiv\big[\neg(\forall \underline{y})[\neg\st(\underline{y})\vee\neg \psi_{0}(\underline{x}, \underline{y}, \underline{a})]\big]^{S_{\st}}\\
&\equiv(\forall^{\st} \underline{V})(\exists^{\st}\underline{w})(\forall \underline{v}\in \underline{V}[\underline{w}])\neg[(\forall \underline{y})[(\underline{w}\ne \underline{y}) \vee \neg\psi_{0}(\underline{x}, \underline{y}, \underline{a})]].\\
&\equiv (\exists^{\st}\underline{w})(\exists \underline{y})[(\underline{w}= \underline{y}) \wedge \psi_{0}(\underline{x}, \underline{y}, \underline{a})]]\equiv (\exists^{\st}\underline{w})\psi_{0}(\underline{x}, \underline{w}, \underline{a}).
\end{align*}
Hence, we have proved so far that $(\exists^{\st}\underline{y})\psi_{0}(\underline{x}, \underline{y}, \underline{a})$ is invariant under $S_{\st}$.  By the previous, we also obtain:  
\[
\big[\neg \st(\underline{x})\vee (\exists^{\st}y)\psi_{0}(\underline{x}, \tup y, \tup a)\big]^{S_{\st}}\equiv  (\forall^{\st}\underline{w}')(\exists^{\st} \underline{w})[(\underline{w}'\ne \underline{x}) \vee \psi_{0}(\tup x, \tup w, \tup a)].
\]
Our final computation now yields the desired result: 
\begin{align*}
\big[(\forall^{\st} \underline{x})(\exists^{\st}y)\psi_{0}(\underline{x}, \tup y, \tup a)\big]^{S_{\st}}
&\equiv\big[(\forall \underline{x})(\neg \st(\underline{x})\vee (\exists^{\st}y)\psi_{0}(\underline{x}, \tup y, \tup a))\big]^{S_{\st}}\\
&\equiv(\forall^{\st}\underline{w}')(\exists^{\st} \underline{w})(\forall \underline{x})(\exists \underline{w}''\in \underline{w})[(\underline{w}'\ne \underline{x}) \vee \psi_{0}(\tup x, \tup w'', \tup a)].\\
&\equiv(\forall^{\st}\underline{w}')(\exists^{\st} \underline{w})(\exists \underline{w}''\in \underline{w}) \psi_{0}(\tup w', \tup w'', \tup a).
\end{align*}
The last step is obtained by taking $\underline{x}=\underline{w}'$.  Hence, we may conclude that the normal form $(\forall^{\st} \underline{x})(\exists^{\st}y)\psi_{0}(\underline{x}, \tup y, \tup a)$ is invariant under $S_{\st}$, and we are done.    \qed
\end{proof}
Note that the previous proof may also be found in \cite{samzoo, samGH, sambon}.  
\subsection{Full proof of Theorem \ref{proto}}\label{kulll}
In this section, we establish the part of Theorem \ref{soareyouuuuuu} not covered by the proof in Section \ref{FUWKL}.  
\begin{proof}
We first prove $\paai \di \UWKL^{+}$ in $\RCAO$.  To this end, consider the functional $\nu^{2}$ defined as follows:
\[
\nu(f^{1}, M^{0}):=
\begin{cases}
(\mu n \leq M)f(n)=0 & \text{ if such exists} \\
0 & \text{ otherwise} 
\end{cases}.
\]
Thanks to $\paai$, we have $(\forall^{\st}f^{1})(\forall M, N\in \Omega)(\nu(f,M)=_{0}\nu(f, N))$, and applying underspill (which is available in $\RCAO$ due to \cite{brie}*{\S5.3}), we obtain
\be\label{labbe}
(\forall^{\st}f^{1})(\exists^{\st} n^{0})(\forall M, N\geq n)(\nu(f,M)=_{0}\nu(f, N)),
\ee
and applying $\HAC_{\INT}$ to \eqref{labbe} yields $\Psi^{1\di 0^{*}}$ and $\Phi(f):=\max_{i<|\Psi(f)|}\Psi(f)(i)$, such that $\nu(\cdot,\Phi(\cdot))$ is essentially Feferman's operator relative to `st', i.e.\ we have shown $(\mu^{2})^{\st}$.   However, $(\mu^{2})$ implies arithmetical comprehension as follows: 
\[
(\forall f^{0})\big[ (\exists n^{0})f(n)=0 \asa f(\mu(f))=0\big], 
\]
which in turn yields weak K\"onig's lemma (See \cite{simpson2}*{\S2}).  
By the previous, we have $\WKL^{\st}$, and applying $\paai$ to the consequent, we obtain that 
\be\label{spec}
(\forall^{\st}T^{1}\leq_{1}1)\big[ (\forall n)(\exists \alpha)(|\alpha|=n\wedge \alpha\in T)\di (\exists^{\st}\beta^{1}\leq_{1}1)(\forall m)(\overline{\beta}m\in T)\big]
\ee 
Now bring outside the standard quantifier `$(\exists^{\st}\beta^{1}\leq_{1} 1)$' and apply $\HAC_{\INT}$ to the resulting formula to obtain $\Psi^{1\di 1^{*}}$.
By definition, if $T$ is a standard infinite binary tree, one of the entries of $\Psi(T)$ is a (standard) path through $T$.  Thanks to $(\mu^{2})^{\st}$ and $\paai$, we can test \emph{which} entry of $\Psi$ is such a path, and define $\Phi^{1\di 1}$ to be the $\Psi(T)(i)$ with that property, where $i$ is least.  Hence, we obtain the first conjunct of $\UWKL^{+}$.  The second conjunct of the latter now easily follows by applying $\paai$ to the axiom of extensionality $\eqref{EXT}$ pertaining to $\Phi$.  
 
\medskip

By the previous, $\RCAO$ proves $\paai\di \UWKL^+$, and the latter implies, in the same was as in the proof of Theorem \ref{proto}, that
\[
(\forall^{\st} \mu^{2})\big[\textsf{\MU}(\mu)\di \eqref{spec}\big].
\]
Bringing the previous into normal form and applying Corollary \ref{consresultcor2}, one obtains a term $t$ for which one entry of $t(\mu, T)$ is a path through $T$, if $T$ is infinite and $\mu$ is as in $(\mu^{2})$.  It is trivial to use $(\mu^{2})$ to find the correct entry of $t(\mu, T)$.   %and we are done.  
\qed
\end{proof}
\subsection{Full proof of Theorem \ref{soareyouuuuuu}}\label{fappendix}
In this section, we establish the part of Theorem \ref{proto} not covered by the proof in Section \ref{group1}.  
\begin{proof}
The proof of Theorem \ref{soareyouuuuuu} hinges on the equivalence $\eqref{kral}\asa \eqref{kral2}$, 
the forward direction of which we establish now, based on \cite{simha}*{Lemma 2.3}.  %The other direction follows in exactly the same way.  

\medskip

First of all, the proof of $\WKL\di \ORD$ in $\RCA_{0}$ in the aforementioned lemma proceeds by defining for a countable torsion-free group $A$, a certain binary tree $T^{A}$ and associated set $P_{\sigma}^{A}$.  This tree $T^{A}$ is proved to be infinite, and the path provided by $\WKL$ (this being the only use of $\WKL$) is used to define the positive cone $P^{A}$ of $A$ (See Definition \ref{shiova}) via $P^{A}:=\cup_{\sigma}P^{A}_{\sigma}$, where each $\sigma$ is an initial segment of the aforementioned path.     

\medskip

Secondly, in light of the previous paragraph, the binary tree $T^{A}$ and set $P_{\sigma}^{A}$ may be defined in $\RCAO$.  By the definition of these objects in \cite{simha}*{p.\ 178}, they are standard.  Furthermore, since all recursor constants are standard (by Definition~\ref{debs}), we may use $\Pi_{1}^{0}$-induction and its relativisation to `st'.  Hence, $\RCAO$ also proves that the tree $T$ is infinite relative to `\st' (in the sense that it includes binary type zero sequences of any standard length).  

\medskip

Thirdly, working in $\RCAO+\eqref{kral}$ let $A$ be a standard countable abelian group such that $(\forall^{\st}n, a\in A)(n\times a\ne 0_{A})$, i.e.\ $A$ is torsion-free relative to `st'.  By the contraposition of \eqref{kral}, the tree $T^{A}$ is such that $(\forall^{\st}g^{2})(\exists \beta\leq_{1}1)\overline{\beta}g(\beta) \in T^{A}$.    
For standard $h^{2}$, let $\beta^{1}\leq_{1}1$ be such a sequence and define $P^{A,h}:=\cup_{\sigma \preceq \overline{\beta}3h(\beta)+3}P_{\sigma}$.  
Clearly, $P^{A, h}$ is a `partial' positive cone of $A$, and we have $ (\exists X^{1})(\forall a, b, c \in A)(a, b, c \leq h(X) \di a+c \leq_{X} b+c)$.  Hence, we have proved that for standard $A^{1},h^{2}$, we have the contraposition of \eqref{kral2}.  In short, $\RCAO\vdash \eqref{kral}\di \eqref{kral2}$.  

\medskip

Finally, the reverse implication can be proved in exactly the same way based on the proof of \cite{simha}*{Theorem 2.5}.  However, this involves proving a version of $\eqref{kral}\asa \eqref{kral2}$ for $\FAN$ and \cite{simpson2}*{IV.4.4.3}, where the latter is the statement that the ranges of two non-overlapping functions can be separated by a set.  Although this is quite straightforward, it is beyond the scope of this appendix.      
\qed
\end{proof}
\bye